\documentclass[12pt]{amsart}

\usepackage{amsmath,amssymb,amsthm}
\usepackage{mathrsfs}
\usepackage[english]{babel}
\usepackage[T1]{fontenc}
\usepackage{graphicx}
\usepackage{a4wide}
\usepackage{float}

\usepackage{hyperref}

\numberwithin{equation}{section}

\theoremstyle{plain}
\newtheorem{theorem}[equation]{Theorem}
\newtheorem{lemma}[equation]{Lemma}
\newtheorem{proposition}[equation]{Proposition}
\newtheorem{corollary}[equation]{Corollary}

\theoremstyle{definition}
\newtheorem{definition}[equation]{Definition}
\newtheorem{example}[equation]{Example}
\newtheorem{remark}[equation]{Remark}

\begin{document}

\bibliographystyle{amsplain}

\title[Variable Exponent Modulus in Symmetric Domains]%
      {Variable Exponent Modulus in Symmetric Domains}

\author{Rahim Kargar}
\address{Department of Mathematics and Statistics,
         University of Turku, Turku, Finland}
\email{rahim.r.kargar@utu.fi}

\keywords{Variable exponent modulus; extremal density;
          log-H\"older continuity; capacity--modulus duality;
          symmetric domains}
\subjclass[2020]{46E35, 30C65, 31C15}

\begin{abstract}
We develop a rigorous variational theory for the modular variable-exponent
modulus of curve families in two symmetric geometries: annuli
$A(r_1,r_2)\subset\mathbb{R}^n$ with radial exponent
$p(x)=q_0(|x|)$, and cylinders $\mathcal C=D\times(0,L)$ with axial exponent
$p(x',t)=\eta(t)$, under the assumption
$1<p^-\le p^+<\infty$. For annuli, spherical averaging reduces the modulus
problem to a one-dimensional constrained variational problem and yields a
unique positive extremal density
\begin{equation*}
\rho_*(r)
=
\left(
\frac{\lambda}
{\omega_{n-1}q_0(r)r^{n-1}}
\right)^{1/(q_0(r)-1)},
\end{equation*}
where $\lambda>0$ is uniquely determined by the normalization
$\int_{r_1}^{r_2}\rho_*(r)\,dr=1$. An analogous fiber-averaging argument
reduces the cylindrical problem to one dimension and gives the corresponding
unique extremal profile. We recover the classical logarithmic and constant
extremal densities in the conformal and constant-exponent cases, respectively,
and derive explicit upper bounds from these test densities.

We further establish equality between the relative variable-exponent
capacity of compact condensers and the corresponding curve modulus under the
stated regularity assumptions, obtaining explicit capacity formulas in the
radially symmetric annular setting. Finally, we discuss the obstruction to
quasiconformal quasi-invariance for nonconstant exponents and formulate a
related open problem, and illustrate the variational formulas and test-density
bounds through numerical examples.
\end{abstract}

\maketitle

\section{Introduction}\label{sec:intro}

The theory of function spaces with nonstandard growth has become an
important part of modern nonlinear analysis, with applications ranging
from electrorheological fluids and image processing to nonlinear
elliptic equations with nonuniform growth conditions. Among the most
extensively studied examples are the variable exponent Lebesgue and
Sobolev spaces
$L^{p(\cdot)}(\Omega)$ and $W^{1,p(\cdot)}(\Omega)$,
whose systematic development goes back to the work of
Kov\'a\v{cik and R\'akosn\'ik}~\cite{KR} and is treated in detail in
the monographs \cite{Cruz2013,Diening2011}.

An important geometric quantity associated with variable exponent
spaces is the modulus of a family of curves. The variable exponent
modulus, introduced in the work of Harjulehto, H\"ast\"o, and
Martio~\cite{HHM}, extends the classical conformal and $p$-modulus
developed by Ahlfors and V\"ais\"al\"a
\cite{Ahlfors,Vaisala} to the variable growth setting. It has become
an important tool in the study of variable exponent Sobolev spaces,
capacities, absolute continuity on curves, and mappings of finite
distortion.

In contrast with the constant-exponent theory, explicit evaluations
of the variable exponent modulus are relatively scarce. In particular,
we are not aware of explicit formulas for the extremal densities and
modulus values for the genuinely nonconstant radial variable-exponent annulus and axial variable-exponent cylinder considered in this paper.
The method used below is classical in spirit: symmetry or fiberwise
reduction is used to reduce the corresponding modulus problem to a
one-dimensional variational problem. The main point is that, in the
variable exponent setting, the exponent must be compatible with the
relevant geometric structure. For the annulus, this compatibility is
expressed by radial symmetry of the exponent, whereas for the cylinder
the exponent is assumed to depend only on the axial variable.

The contribution of the present paper is therefore not a new
variational principle. Rather, we carry out this reduction explicitly
for the two model geometries above and obtain formulas for the
corresponding extremal densities and modulus values. The resulting
expressions recover the classical constant-exponent formulas when the
variable exponent is constant. This reduction provides, in particular,
a useful consistency check on the formulas obtained below.

We first consider the open annulus
\begin{equation*}
A(r_1,r_2)
=
\{x\in\mathbb{R}^n:r_1<|x|<r_2\},
\qquad
0<r_1<r_2<\infty.
\end{equation*}
The exponent is assumed to be radially symmetric, namely
\begin{equation*}
p(x)=q(|x|),
\qquad x\in A(r_1,r_2),
\end{equation*}
where
$q:[r_1,r_2]\longrightarrow(1,\infty)$
is measurable and satisfies
\begin{equation*}
1<p^-:=
\operatorname*{ess\,inf}_{t\in[r_1,r_2]}q(t)
\le
\operatorname*{ess\,sup}_{t\in[r_1,r_2]}q(t)
=:p^+
<\infty.
\end{equation*}
We consider the family $\Gamma$ of locally rectifiable curves joining
the two boundary components of the annulus. Since both the annulus and
the exponent are invariant under the orthogonal group $O(n)$, an
$O(n)$-averaging argument reduces the modulus minimization problem to
radial densities. More precisely, the spherical average of an
admissible density remains admissible, while Jensen's inequality,
together with the radial dependence of the exponent, shows that its
modular energy does not increase. Once existence of a minimizer has
been established, strict convexity of the modular implies its
uniqueness; consequently, the extremal density is radial. The
remaining minimization problem is therefore one-dimensional.

The corresponding one-dimensional problem can be treated without any
differentiability assumption on the measurable exponent. Its
minimizer is characterized by a pointwise convex minimization under a
single integral constraint, and the normalization determines the
associated positive Lagrange multiplier. This yields the explicit
extremal density obtained in Section~\ref{sec:variational}.

The second model geometry is the cylinder
\begin{equation*}
\mathcal{C}=D\times(0,L),
\end{equation*}
where $D\subset\mathbb{R}^{n-1}$ is a bounded domain and $L>0$. We
assume that the exponent depends only on the axial variable:
\begin{equation*}
p(x',t)=\eta(t),
\qquad
(x',t)\in\mathcal{C},
\end{equation*}
where
$\eta:[0,L]\longrightarrow(1,\infty)$
is measurable.
We consider the family of locally rectifiable curves that join the lower
base $D\times\{0\}$ to the upper base $D\times\{L\}$.

In this case, no translation invariance of $D$ is required. Indeed, for
each $x'\in D$ the vertical segment is
\begin{equation*}
\gamma_{x'}(t)=(x',t),
\qquad 0\le t\le L,
\end{equation*}
belongs to the curve family. Thus, admissibility yields a
one-dimensional integral constraint on every vertical fiber. Applying
the corresponding one-dimensional variational inequality fiberwise
gives a lower bound for the full modular energy. A density dependent
only on $t$, given by the one-dimensional minimizer, attains this
bound. Hence, the cylinder problem also reduces exactly to a
one-dimensional variational problem. Notice that the geometry of $D$
enters the resulting modulus only through its $(n-1)$-dimensional
Lebesgue measure.

Beyond the explicit formulas, we examine the standard admissible test
densities associated with the two geometries and determine precisely
when they are extremal. For the annulus, the classical test density has
the logarithmic form $\rho_0(x)=c/|x|$,
whereas for the cylinder, the corresponding test density is constant.
When the exponent is genuinely nonconstant, these densities need not
be extremal. We characterize the conditions on the radial profile $q$
and the axial profile $\eta$ under which they remain extremal. These
conditions are restrictive and disappear in the usual constant
exponent case. We also establish a comparison between the variable
exponent modulus and the corresponding variable exponent capacity
under appropriate regularity assumptions on the exponent.

The paper is organized as follows. Section~\ref{sec:prelim} recalls
the required background on variable exponent spaces, modulus, and
capacity. Section~\ref{sec:examples} presents motivating examples.
Section~\ref{sec:variational} develops the variational formulation and
derives the explicit extremal densities for annuli and cylinders,
including the consistency with the case of constant-exponents.
Section~\ref{sec:bounds} establishes upper bounds and characterizes
the corresponding equality cases. Section~\ref{sec:capacity} proves
the capacity--modulus equality. The remaining sections discuss
consequences, quasiconformal distortion, numerical examples, and
related open problems.

\section{Preliminaries}\label{sec:prelim}

Throughout the paper, $\Omega\subset\mathbb{R}^n$ denotes a bounded
open set, and
\begin{equation*}
p:\Omega\longrightarrow(1,\infty)
\end{equation*}
is a measurable variable exponent satisfying
\begin{equation}\label{ineq:p- les p+}
1<p^-:=
\operatorname*{ess\,inf}_{x\in\Omega}p(x)
\le
\operatorname*{ess\,sup}_{x\in\Omega}p(x)
=:p^+
<\infty.
\end{equation}
When boundary values of the exponent are needed, we assume that $p$
admits an appropriate extension to $\overline{\Omega}$. In particular,
for the annular and cylindrical model domains considered later, the
radial or axial profile is understood on the corresponding closed
interval whenever boundary values are required.

For $r>0$, we write
\begin{equation*}
S_r^{\,n-1}
=
\{x\in\mathbb{R}^n:|x|=r\},
\end{equation*}
denote by $\omega_{n-1}$ the Hausdorff measurement dimensional Hausdorff $(n-1)$
of the unit sphere $S_1^{\,n-1}=:S^{n-1}$.

\subsection{Variable exponent spaces}

For a measurable exponent
$p:\Omega\longrightarrow(1,\infty)
$
satisfying \eqref{ineq:p- les p+}, the Lebesgue space of variable exponent $L^{p(\cdot)}(\Omega)$ consists
of all measurable functions
$f:\Omega\to\mathbb{R}$ for which the modular model
\begin{equation*}
\varrho_{p(\cdot)}(f)
=
\int_\Omega |f(x)|^{p(x)}\,dx
\end{equation*}
is finite.
Under the standing assumption \eqref{ineq:p- les p+},
the space $L^{p(\cdot)}(\Omega)$ is a separable, uniformly convex, and
reflexive Banach space.

The variable exponent Sobolev space is \cite[Chapter 8]{Diening2011}
\begin{equation*}
W^{1,p(\cdot)}(\Omega)
=
\left\{
u\in L^{p(\cdot)}(\Omega):
|\nabla u|\in L^{p(\cdot)}(\Omega)
\right\},
\end{equation*}
where $\nabla u$ denotes the weak gradient. Equivalently,
\begin{equation*}
W^{1,p(\cdot)}(\Omega)
=
\left\{
u\in L^{p(\cdot)}(\Omega):
D_i u\in L^{p(\cdot)}(\Omega),
\ i=1,\ldots,n
\right\},
\end{equation*}
where $D_i u=\partial u/\partial x_i$ denotes the $i$-th weak partial derivative of $u$.
Its natural modular is
\begin{equation*}
\nu_{p(\cdot)}(u)
=
\int_\Omega |u(x)|^{p(x)}\,dx
+
\int_\Omega |\nabla u(x)|^{p(x)}\,dx.
\end{equation*}
The space $L^{p(\cdot)}(\Omega)$ is equipped with the Luxemburg norm
\begin{equation*}
\|u\|_{L^{p(\cdot)}(\Omega)}
=
\inf\left\{
\lambda>0:
\int_\Omega
\left|\frac{u(x)}{\lambda}\right|^{p(x)}\,dx
\le1
\right\},
\end{equation*}
and $W^{1,p(\cdot)}(\Omega)$ is equipped with the norm
\begin{equation*}
\|u\|_{W^{1,p(\cdot)}(\Omega)}
=
\|u\|_{L^{p(\cdot)}(\Omega)}
+
\|\nabla u\|_{L^{p(\cdot)}(\Omega)}.
\end{equation*}

We refer to
\cite{Cruz2013,Diening2011,FZ, FSZ, KR}
for the standard properties of variable exponent Lebesgue and Sobolev
spaces.

For the capacity comparison in Section~\ref{sec:capacity}, we impose
additional regularity on the exponent. In particular, we shall use
log-H\"older continuity in the form (see, \cite[Definition 4.1.1]{Diening2011})
\begin{equation*}
|p(x)-p(y)|
\le
\frac{C_{\log}}
{\log\left(e+\frac{1}{|x-y|}\right)}
\end{equation*}
whenever $x,y\in\Omega$ are in the range where the corresponding
log-H\"older estimate is assumed. We denote by
$\mathcal P^{\log}(\Omega)$ the class of exponents satisfying the
specific log-H\"older hypotheses stated in Section~\ref{sec:capacity}.

The precise regularity assumptions on $p$ will be imposed only where
they are needed. In particular, the explicit variational calculations
in Section~\ref{sec:variational} require only the measurability of the
profiles together with the uniform bounds \eqref{ineq:p- les p+}.
No differentiability of the exponent is used in the corresponding
one-dimensional minimization.

\subsection{Modulus and capacity}

Let $\Gamma$ be a family of locally rectifiable curves contained in
$\overline{\Omega}$. A Borel function $\rho:\Omega\longrightarrow[0,\infty]$
is called admissible for $\Gamma$, and we write
$\rho\in\mathcal F(\Gamma)$, if
\begin{equation*}
\int_\gamma \rho\,ds\ge1
\end{equation*}
for every $\gamma\in\Gamma$.
In this paper, we use the modular $p(\cdot)$-modulus of $\Gamma$,
defined by
\begin{equation*}
M_{p(\cdot)}(\Gamma)
=
\inf_{\rho\in\mathcal F(\Gamma)}
\int_\Omega \rho(x)^{p(x)}\,dx.
\end{equation*}
If $\mathcal F(\Gamma)=\varnothing$, we set
$M_{p(\cdot)}(\Gamma)=\infty.$
When $p(x)\equiv p$ is constant, this definition reduces to the
classical conformal $p$-modulus
\cite{Ahlfors,HKV,Vaisala}.

By the definition of capacity used in \cite[p. 27]{Heinonen} for the constant-exponent case,
for compact sets $E,F\subset\Omega$ with $E\cap F=\varnothing$, we define the relative capacity of the variable exponent by (see also \cite[Proposition 10.2.2]{Diening2011})
\begin{equation*}
\operatorname{Cap}_{p(\cdot)}(E,F;\Omega)
=
\inf_{u\in\mathcal A(E,F;\Omega)}
\int_\Omega |\nabla u(x)|^{p(x)}\,dx,
\end{equation*}
where
\begin{equation*}
\mathcal A(E,F;\Omega)
=
\left\{
u\in W^{1,p(\cdot)}(\Omega):
\widetilde u\ge1\ \text{q.e. on }E,\quad
\widetilde u\le0\ \text{q.e. on }F
\right\}.
\end{equation*}
Here $\widetilde u$ denotes the quasicontinuous representative of $u$,
and ``q.e.'' means outside a set of variable-exponent Sobolev capacity
zero. This is the notion of capacity used in the comparison result of Section~\ref{sec:capacity}, and the above definition differs from the one defined in \cite{HL}.

When compact sets meet the boundary of $\Omega$, the precise meaning of
the boundary constraints will be specified through the continuous
competitor formulation or the appropriate trace formulation used in
Section~\ref{sec:capacity}. This avoids identifying interior
quasi-everywhere conditions with boundary pointwise conditions without
additional justification.

For the explicit computations in
Sections~\ref{sec:variational}--\ref{sec:bounds}, it is convenient to
consider the class of continuous competitors
\begin{equation*}
\mathcal A_c(E,F;\Omega)
=
\left\{
u\in C(\overline{\Omega})\cap W^{1,p(\cdot)}(\Omega):
u\ge1\ \text{on }E,\quad
u\le0\ \text{on }F
\right\},
\end{equation*}
where $C(\overline{\Omega})$ denotes the space of all real-valued
continuous functions on the closure $\overline{\Omega}$.
We then set
\begin{equation*}
\operatorname{Cap}_{p(\cdot)}^c(E,F;\Omega)
=
\inf_{u\in\mathcal A_c(E,F;\Omega)}
\int_\Omega|\nabla u|^{p(x)}\,dx.
\end{equation*}
Since
\begin{equation*}
\mathcal A_c(E,F;\Omega)
\subset \mathcal A(E,F;\Omega)
\end{equation*}
whenever the latter formulation is applicable, one always has
\begin{equation*}
\operatorname{Cap}_{p(\cdot)}^c(E,F;\Omega)
\ge
\operatorname{Cap}_{p(\cdot)}(E,F;\Omega).
\end{equation*}
Under the additional regularity assumptions on the exponent and the
geometric assumptions on $E$ and $F$ stated in
Section~\ref{sec:capacity}, we prove that equality holds. The argument
uses approximation by continuous or smooth competitors together with
the preservation of the boundary inequalities; the latter point is
essential and does not follow from density alone.

\section{Motivating Examples}\label{sec:examples}

The following examples, annulus with radial exponent and cylindrical domain with axial exponent, illustrate the two model geometries that
motivate our analysis. In both cases, the geometry of the domain and
the dependence of the variable exponent are compatible in a way that
allows the corresponding modulus problem to be reduced to a
one-dimensional variational problem. For the annulus, this reduction
follows from spherical symmetry. For the cylinder, it follows from the
fact that the exponent depends only on the axial variable. The precise
variational reductions are established in
Section~\ref{sec:variational}.

\begin{example}
\label{ex:annulus}
\rm

Let
\begin{equation*}
A(r_1,r_2)
=
\{x\in\mathbb{R}^n:r_1<|x|<r_2\},\qquad 0<r_1<r_2<\infty,
\end{equation*}
and let $\Gamma$ denote the family of all locally rectifiable curves
in $\overline{A(r_1,r_2)}$ that join the inner and outer boundary
spheres $S_{r_1}^{n-1}$ and $S_{r_2}^{n-1}$.
Consider first the constant-exponent case $p(x)\equiv p$, $1<p<\infty$.
The corresponding modular $p$-modulus is
\begin{equation*}
M_p(\Gamma)
=
\omega_{n-1}
\left(
\int_{r_1}^{r_2}
r^{-\frac{n-1}{p-1}}\,dr
\right)^{1-p};
\end{equation*}
see, for example, \cite{Hesse,VuorinenLNM}. In the conformally
invariant case $p=n$, this becomes
\begin{equation*}
M_n(\Gamma)
=
\omega_{n-1}
\left(
\log\frac{r_2}{r_1}
\right)^{1-n}.
\end{equation*}
The extremal density is radial and is given, up to equality almost
everywhere, by
\begin{equation*}
\rho_*(x)
=
c\,|x|^{-\frac{n-1}{p-1}},
\end{equation*}
where the normalization condition
\begin{equation*}
\int_{r_1}^{r_2}
c\,r^{-\frac{n-1}{p-1}}\,dr
=
1
\end{equation*}
uniquely determines the constant $c>0$. Equivalently,
\begin{equation*}
c
=
\left(
\int_{r_1}^{r_2}
r^{-\frac{n-1}{p-1}}\,dr
\right)^{-1}.
\end{equation*}

The constant-exponent formula is recalled only for orientation. It will
also be recovered independently from the variable-exponent
variational formulation in Section~\ref{sec:variational}; see
Corollary~\ref{cor:constant_exp}.

We now consider a variable exponent radially symmetric $p(x)=q(|x|)$, $x\in A(r_1,r_2)$,
where $q:[r_1,r_2]\longrightarrow(1,\infty)$
is measurable and satisfies \eqref{ineq:p- les p+}.

The rotational invariance of both the annulus and the exponent allows
one to average admissible densities over the orthogonal group
$O(n)$. The resulting radial density remains admissible, while
Jensen's inequality shows that its modular energy does not exceed that
of the original density. Consequently, once existence and uniqueness
of the minimizer have been established, the extremal density is radial.
The full modulus problem therefore reduces to a one-dimensional
variational problem on $[r_1,r_2]$. This reduction, and the resulting
explicit extremal density, are established in
Section~\ref{sec:variational}.
\end{example}

\begin{example}
\label{ex:cylinder}
\rm

Let $\mathcal C
:=
D\times(0,L)$,
where $D\subset\mathbb{R}^{n-1}$ is a bounded domain, $L>0$, and $A:=|D|$
denotes the $(n-1)$-dimensional Lebesgue measure of $D$. Let $\Gamma$
denote the family of all locally rectifiable curves in
$\overline{\mathcal C}$ that join the two bases $D\times\{0\}$ and $D\times\{L\}$.
Assume that the variable exponent depends only on the axial variable,
namely $p(x',t)=\eta(t)$, $(x',t)\in\mathcal C$,
where $\eta:[0,L]\longrightarrow(1,\infty)$
is measurable and satisfies \eqref{ineq:p- les p+}.
Let $\varphi:[0,L]\longrightarrow[0,\infty)$
be a Borel function satisfying
\begin{equation*}
\int_0^L\varphi(t)\,dt\ge1.
\end{equation*}
Define the axial density
$\rho(x',t)=\varphi(t)$.
We claim that $\rho$ is admissible for $\Gamma$.
Indeed, let $\gamma:[a,b]\longrightarrow\overline{\mathcal C}$
be a locally rectifiable curve joining the two bases, and parametrize
$\gamma$ by arc length. Writing $\gamma(s)=(x'(s),t(s))$,
the function $t$ is absolutely continuous and satisfies $t(a)=0$ and
$t(b)=L$.
Moreover, $|t'(s)|
\le
|\gamma'(s)|$ for almost every $s\in[a,b]$.
Therefore,
\begin{align*}
\int_\gamma \rho\,ds
=
\int_a^b \varphi(t(s))\,|\gamma'(s)|\,ds
\ge
\int_a^b \varphi(t(s))\,|t'(s)|\,ds
\ge
\int_0^L\varphi(t)\,dt
\ge1.
\end{align*}
Here the second inequality is the standard one-dimensional
change-of-variables inequality for an absolutely continuous function
whose endpoint values are $0$ and $L$. Thus, $\rho\in\mathcal F(\Gamma)$.

Its modular energy is
\begin{align*}
\int_{\mathcal C}\rho(x',t)^{p(x',t)}\,dx'\,dt
=
\int_D\int_0^L
\varphi(t)^{\eta(t)}\,dt\,dx'
=
A\int_0^L
\varphi(t)^{\eta(t)}\,dt.
\end{align*}
Consequently,
\begin{equation*}
M_{p(\cdot)}(\Gamma)
\le
A
\inf_{\substack{\varphi\ge0\\
\int_0^L\varphi(t)\,dt\ge1}}
\int_0^L\varphi(t)^{\eta(t)}\,dt.
\end{equation*}

This establishes the upper-bound part of the one-dimensional
reduction. The reverse inequality follows by applying the admissibility
condition to the vertical segments $\gamma_{x'}(t)=(x',t)$, $0\le t\le L$,
and then minimizing fiberwise. Hence, the above inequality is, in fact
, an equality; the precise statement and proof are given in
Proposition~\ref{prop:cylinder}. The corresponding one-dimensional
minimizer yields an extremal density for the cylinder, as established
in Section~\ref{sec:variational}.
\end{example}

\section{Variational Characterization and Reduction to One Dimension}
\label{sec:variational}

\subsection{The one-dimensional variational problem}
\label{subsec:1d_problem}

The fiberwise reduction developed below reduces the computation of the
variable exponent modulus in a cylindrical domain to the following
one-dimensional variational problem.

\begin{definition}
\label{def:1d_problem}
Let $L>0$ and let $p:(0,L)\longrightarrow(1,\infty)$
be measurable and satisfy \eqref{ineq:p- les p+}
almost everywhere.
Since $(0,L)$ has finite measure and $p^->1$, the variable exponent
space $L^{p(\cdot)}(0,L)$ is continuously embedded in $L^1(0,L)$; see,
for example, \cite[Section~3.3]{Diening2011}. We therefore define
\begin{equation*}
\mathcal A
:=
\left\{
\varphi\in L^{p(\cdot)}(0,L):
\varphi\ge0\ \text{a.e.},\quad
\int_0^L\varphi(t)\,dt=1
\right\}.
\end{equation*}
For $\varphi\in\mathcal A$, let
\begin{equation*}
\mathcal J(\varphi)
:=
\int_0^L\varphi(t)^{p(t)}\,dt.
\end{equation*}
We consider the minimization problem
\begin{equation}\label{min-problem}
m
:=
\inf_{\varphi\in\mathcal A}\mathcal J(\varphi).
\end{equation}
\end{definition}

The next theorem establishes the existence, uniqueness, and positivity
of the minimizer.

\begin{theorem}
\label{thm:existence}
Under the assumptions of Definition~\ref{def:1d_problem}, the
minimization problem \eqref{min-problem}
admits a unique minimizer $\varphi_*\in\mathcal A$.
Moreover, $\varphi_*(t)>0$ for almost every $t\in(0,L)$.
\end{theorem}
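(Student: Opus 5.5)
I will follow the Lagrange-multiplier route suggested in the introduction. I construct the candidate minimizer explicitly and verify minimality by a pointwise convexity inequality. This avoids any compactness argument.

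For $\lambda>0$ and a.e.\ $t$, consider the strictly convex function $f_t(s)=s^{p(t)}-\lambda s$ on $[0,\infty)$. Its unique minimizer is
\begin{equation*}
s_\lambda(t)=\Bigl(\frac{\lambda}{p(t)}\Bigr)^{1/(p(t)-1)}>0 .
\end{equation*}
Set $\Phi(\lambda)=\int_0^L s_\lambda(t)\,dt$. Because $1<p^-\le p^+<\infty$, the exponent $1/(p(t)-1)$ lies in $[1/(p^+-1),1/(p^--1)]$, and $\lambda/p(t)$ lies between $\lambda/p^+$ and $\lambda/p^-$. Hence $s_\lambda(t)$ is bounded above and below by positive constants depending only on $\lambda,p^\pm$. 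In particular $s_\lambda\in L^\infty\subset L^{p(\cdot)}(0,L)$.

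Next I show that $\Phi$ is a bijection onto $(0,\infty)$. For each $t$, the map $\lambda\mapsto s_\lambda(t)$ is continuous and strictly increasing. It tends to $0$ as $\lambda\to0^+$ and to $\infty$ as $\lambda\to\infty$, with uniform bounds via the two-sided estimates above. Monotone and dominated convergence then show that $\Phi$ is continuous and strictly increasing, with limits $0$ and $\infty$. So there is a unique $\lambda_*>0$ with $\Phi(\lambda_*)=1$, and I set $\varphi_*:=s_{\lambda_*}$. Then $\varphi_*\in\mathcal A$ and $\varphi_*>0$ a.e.

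Minimality follows from the pointwise inequality $f_t(\varphi(t))\ge f_t(\varphi_*(t))$. For any $\varphi\in\mathcal A$, integrating gives
\begin{equation*}
\mathcal J(\varphi)-\lambda_*\int_0^L\varphi\,dt\ \ge\ \mathcal J(\varphi_*)-\lambda_*\int_0^L\varphi_*\,dt .
\end{equation*}
Both integrals equal $1$, so $\mathcal J(\varphi)\ge\mathcal J(\varphi_*)$. All quantities are finite: $\varphi\in L^{p(\cdot)}$ has finite modular since $p^+<\infty$, and $\varphi\in L^1$ by the embedding quoted in Definition~\ref{def:1d_problem}.

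For uniqueness, suppose $\mathcal J(\varphi)=\mathcal J(\varphi_*)$. Then $\int_0^L\bigl(f_t(\varphi)-f_t(\varphi_*)\bigr)\,dt=0$ with a nonnegative integrand, so $f_t(\varphi(t))=f_t(\varphi_*(t))$ a.e. Strict convexity of $f_t$ then forces $\varphi=\varphi_*$ a.e., because $f_t$ has a unique minimizer.

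The main technical point is the bijectivity of $\Phi$, in particular its continuity and its limits at $0$ and $\infty$ for a merely measurable exponent. I will handle this with the explicit two-sided bounds $\min\{(\lambda/p^+)^{a},(\lambda/p^+)^{b}\}\le s_\lambda(t)\le\max\{(\lambda/p^-)^{a},(\lambda/p^-)^{b}\}$, where $a=1/(p^+-1)$ and $b=1/(p^--1)$. These bounds are uniform in $t$, so the dominated convergence step applies.
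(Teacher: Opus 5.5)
Your proof is correct, and it takes a genuinely different route from the paper.

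The paper proves existence by the direct method. A minimizing sequence is bounded in $L^{p(\cdot)}(0,L)$ by the modular--norm inequalities, and reflexivity gives a weak limit. That limit lies in $\mathcal A$ because $\mathcal A$ is closed and convex, hence weakly closed, and it is a minimizer because $\mathcal J$ is convex and continuous, hence weakly lower semicontinuous. Uniqueness comes from strict convexity applied to the midpoint of two minimizers. Positivity needs a separate mass-transfer perturbation: moving mass $\varepsilon$ onto the zero set costs $O(\varepsilon^{p^-})$, while the saving on a set where $\varphi_*\ge\delta$ is at least of order $\varepsilon$. Only afterwards does the paper derive the Euler--Lagrange equation (Theorem~\ref{thm:EL}) by first variations on the level sets $E_k$, and fix $\lambda$ through the function $F$ of Corollary~\ref{cor:explicit}, which is exactly your $\Phi$.

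You run the argument in the opposite direction, as a calibration (Lagrangian-duality) proof. You build $\varphi_*=s_{\lambda_*}$ from the pointwise minimization of $s^{p(t)}-\lambda s$. You then certify global minimality with the same supporting-line inequality that the paper uses only in the converse half of Theorem~\ref{thm:EL}.

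What your route buys:
\begin{itemize}
\item It needs no reflexivity, no weak lower semicontinuity and no closedness of $\mathcal A$.
\item Positivity is automatic, since $s_\lambda>0$.
\item Uniqueness follows from the equality case of a nonnegative integrand.
\item Theorem~\ref{thm:EL} and Corollary~\ref{cor:explicit} fall out of the same computation.
\item It proves minimality among all nonnegative measurable $\varphi$ with $\int_0^L\varphi\,dt=1$, without assuming $\varphi\in L^{p(\cdot)}(0,L)$ in advance; if $\mathcal J(\varphi)=\infty$ the inequality is trivial.
\end{itemize}

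What the paper's route buys: the direct method does not depend on solving the first-order condition pointwise, nor on the separable integrand with a single linear constraint. It is therefore the template that carries over to variational problems without that explicit structure. For the statement at hand, though, your argument is complete and arguably cleaner.
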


\begin{proof}
The admissible class $\mathcal A$ is nonempty; for example,
the constant function $\varphi\equiv L^{-1}$ belongs to $\mathcal A$.
Let $(\varphi_k)\subset\mathcal A$ be a minimizing sequence. Since
$\mathcal J(\varphi_k)\longrightarrow m$,
the sequence $(\mathcal J(\varphi_k))$ is bounded. The standard
modular--norm inequalities in variable exponent spaces therefore imply
that $(\varphi_k)$ is bounded in $L^{p(\cdot)}(0,L)$. Since
$1<p^-\le p^+<\infty$,
the space $L^{p(\cdot)}(0,L)$ is reflexive; see
\cite[Theorem~3.4.9]{Diening2011}. Hence, after passing to a subsequence,
there exists $\varphi_*\in L^{p(\cdot)}(0,L)$ such that $\varphi_k\rightharpoonup\varphi_*$ weakly in $L^{p(\cdot)}(0,L)$.

The set $\mathcal A$ is convex and closed in
$L^{p(\cdot)}(0,L)$. Indeed, the condition
$\int_0^L\varphi(t)dt=1$
is preserved under norm convergence because
$L^{p(\cdot)}(0,L)\hookrightarrow L^1(0,L)$ continuously, while the
nonnegativity condition defines a closed convex cone. Consequently,
$\mathcal A$ is weakly closed. Thus, $\varphi_*\in\mathcal A$.

The functional $\mathcal J$ is convex and continuous on
$L^{p(\cdot)}(0,L)$. Hence it is weakly lower semicontinuous. Therefore,
\begin{equation*}
\mathcal J(\varphi_*)
\le
\liminf_{k\to\infty}\mathcal J(\varphi_k)
=
m.
\end{equation*}
Since $\varphi_*\in\mathcal A$, the reverse inequality follows from the
definition of $m$, and consequently
$\mathcal J(\varphi_*)=m$.
Thus, $\varphi_*$ is a minimizer.

To prove uniqueness, suppose that
$\varphi_1,\varphi_2\in\mathcal A$ are two minimizers. If
$\varphi_1\ne\varphi_2$ as elements of $L^{p(\cdot)}(0,L)$, then they
differ on a measurable set of positive measure. Since
$p(t)>1$ almost everywhere, the function
$s\longmapsto s^{p(t)}$
is strictly convex on $[0,\infty)$ for almost every $t$. It follows
that
\begin{equation*}
\left(
\frac{\varphi_1(t)+\varphi_2(t)}{2}
\right)^{p(t)}
<
\frac{
\varphi_1(t)^{p(t)}
+
\varphi_2(t)^{p(t)}
}{2}
\end{equation*}
on a set of positive measure. Integrating on $(0,L)$ gives
\begin{equation*}
\mathcal J
\left(
\frac{\varphi_1+\varphi_2}{2}
\right)
<
\frac{\mathcal J(\varphi_1)+\mathcal J(\varphi_2)}{2}
=
m.
\end{equation*}
Since $\mathcal A$ is convex, the average belongs to $\mathcal A$,
which contradicts the definition of $m$. Hence, $\varphi_1=\varphi_2$ a.e. on $(0,L)$,
and the minimizer is unique.

It remains to prove that $\varphi_*$ is positive almost everywhere.
Suppose, to the contrary, that
\begin{equation*}
E:=\{t\in(0,L):\varphi_*(t)=0\}
\end{equation*}
has a positive measure. Since
$\int_0^L\varphi_*(t)dt=1$,
the set $\{\varphi_*>0\}$ has positive measure. Consequently, there
exist a measurable set
$F\subset (0,L)\setminus E$
of positive measure and a number $\delta>0$ such that $\varphi_*(t)\ge\delta$ for almost every $t\in F$.
Define
\begin{equation*}
h
=
\frac{\chi_E}{|E|}
-
\frac{\chi_F}{|F|}.
\end{equation*}
Then
$\int_0^L h(t)\,dt=0$.
For
\begin{equation*}
0<\varepsilon<\frac{\delta|F|}{2},
\end{equation*}
set
$\varphi_\varepsilon
=
\varphi_*+\varepsilon h$.
The choice of $\varepsilon$ ensures that $\varphi_\varepsilon\ge0$ a.e. on $(0,L)$,
and the zero integral of $h$ gives
$\int_0^L\varphi_\varepsilon(t)\,dt=1$.
Thus,
$\varphi_\varepsilon\in\mathcal A.$ We now compare the energies of $\varphi_\varepsilon$ and $\varphi_*$. On
$E$ we have $\varphi_*=0$ and
$\varphi_\varepsilon=\frac{\varepsilon}{|E|}$.
For sufficiently small $\varepsilon$ such that
$\varepsilon/|E|\le1$, the bounds on the exponent give
\begin{equation*}
\left(\frac{\varepsilon}{|E|}\right)^{p(t)}
\le
\left(\frac{\varepsilon}{|E|}\right)^{p^-},
\end{equation*}
and hence
\begin{equation*}
\int_E
\left(\frac{\varepsilon}{|E|}\right)^{p(t)}dt
\le \int_E \left(\frac{\varepsilon}{|E|}\right)^{p^-}dt=
|E|^{1-p^-}\varepsilon^{p^-}.
\end{equation*}

On the other hand, on $F$ we have
\begin{equation*}
\varphi_\varepsilon(t)
=
\varphi_*(t)-\frac{\varepsilon}{|F|}
\ge\frac{\delta}{2}.
\end{equation*}
By the mean value theorem, for almost every $t\in F$ there exists a
number $\xi(t)$ between $\varphi_\varepsilon(t)$ and
$\varphi_*(t)$ such that
\begin{equation*}
\varphi_\varepsilon(t)^{p(t)}
-
\varphi_*(t)^{p(t)}
=
-
p(t)\xi(t)^{p(t)-1}
\frac{\varepsilon}{|F|}.
\end{equation*}
Since $\xi(t)\ge\delta/2$ and
$p^-\le p(t)\le p^+$, there exists a constant
\begin{equation*}
\kappa
:=
\min\left\{
\left(\frac{\delta}{2}\right)^{p^--1},
\left(\frac{\delta}{2}\right)^{p^+-1}
\right\}
>0
\end{equation*}
such that $\xi(t)^{p(t)-1}\ge\kappa.$
Therefore,
\begin{equation*}
\int_F
\left[
\varphi_\varepsilon(t)^{p(t)}
-
\varphi_*(t)^{p(t)}
\right]dt
\le
-
p^-\kappa\,\frac{\varepsilon}{|F|}\,|F|
=
-p^-\kappa\,\varepsilon.
\end{equation*}
Combining the estimates on $E$ and $F$ yields
\begin{equation*}
\mathcal J(\varphi_\varepsilon)
-
\mathcal J(\varphi_*)
\le
C_1\varepsilon^{p^-}
-
C_2\varepsilon,
\end{equation*}
where
\begin{equation*}
C_1=|E|^{1-p^-}>0,
\qquad
C_2=p^-\kappa>0.
\end{equation*}
Since $p^->1$, one has
\begin{equation*}
C_1\varepsilon^{p^-}-C_2\varepsilon<0
\end{equation*}
for all sufficiently small $\varepsilon>0$. Consequently,
\begin{equation*}
\mathcal J(\varphi_\varepsilon)
<
\mathcal J(\varphi_*)
=
m,
\end{equation*}
contradicting the minimality of $\varphi_*$. Hence
$|E|=0,$
and therefore $\varphi_*(t)>0$ for almost every $t\in(0,L)$.
The proof is now complete.
\end{proof}

The minimizer of Definition~\ref{def:1d_problem} satisfies the following
pointwise variational condition, which yields the explicit extremal
densities below.
\begin{theorem}
\label{thm:EL}
Let $\varphi_*$ denote the unique minimizer of
Definition~\ref{def:1d_problem}. Then there exists a constant
$\lambda>0$ such that
\begin{equation}
\label{eq:EL}
p(t)\,\varphi_*(t)^{p(t)-1}
=
\lambda,
\end{equation}
for almost every $t\in(0,L)$.
Conversely, if $\varphi\in\mathcal A$ satisfies
\eqref{eq:EL} for some constant $\lambda>0$, then $\varphi=\varphi_*$ a.e. on $(0,L)$.
\end{theorem}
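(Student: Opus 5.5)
The plan is to deduce \eqref{eq:EL} from the minimality of $\varphi_*$ using mass-transfer perturbations, as in the positivity part of Theorem~\ref{thm:existence}. The converse will follow from convexity: a pointwise supporting-line inequality shows that any $\varphi\in\mathcal A$ satisfying \eqref{eq:EL} is itself a minimizer, and uniqueness then gives $\varphi=\varphi_*$.

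For the forward direction, set $g(t):=p(t)\varphi_*(t)^{p(t)-1}$. This is measurable, and by Theorem~\ref{thm:existence} it is positive a.e. Suppose $g$ is not a.e. constant. Then there are numbers $a<b$ and measurable sets $E,F$ of positive measure with $g\le a$ on $E$ and $g\ge b$ on $F$. After shrinking, I may also assume that $\delta\le\varphi_*\le K$ on $E\cup F$ for some $0<\delta<K$; this is possible because $\varphi_*>0$ a.e. and $\varphi_*<\infty$ a.e. Put $h=\chi_E/|E|-\chi_F/|F|$ and $\varphi_\varepsilon=\varphi_*+\varepsilon h$. For small $\varepsilon$ this stays in $\mathcal A$. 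On $E\cup F$ the values of $\varphi_\varepsilon$ lie in $[\delta/2,2K]$. For $s$ in this range and $p\in[p^-,p^+]$, the second derivative $p(p-1)s^{p-2}$ is bounded by a constant $C$ that does not depend on $t$. A second-order Taylor expansion then gives
\begin{equation*}
\mathcal J(\varphi_\varepsilon)-\mathcal J(\varphi_*)
\le
\varepsilon\int_0^L g\,h\,dt+C\varepsilon^2\int_0^L h^2\,dt
\le
\varepsilon(a-b)+C'\varepsilon^2,
\end{equation*}
which is negative for small $\varepsilon$. This contradicts minimality, so $g\equiv\lambda$ a.e. for some constant $\lambda$, and $\lambda>0$ because $g>0$ a.e.

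The main technical point is the uniform control of the remainder in this expansion. The truncation to $\delta\le\varphi_*\le K$ is what handles it, since the exponent is only measurable and no smoothness in $t$ is available. The positivity of $\varphi_*$ from Theorem~\ref{thm:existence} is essential here. Without it, the level sets where $g$ is small could lie where $\varphi_*=0$, and then only one-sided perturbations would be allowed.

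For the converse, let $\varphi\in\mathcal A$ satisfy $p\varphi^{p-1}=\lambda$ a.e. By convexity of $s\mapsto s^{p(t)}$ on $[0,\infty)$, for every $\psi\in\mathcal A$ we have, pointwise a.e.,
\begin{equation*}
\psi^{p}\ge\varphi^{p}+p\varphi^{p-1}(\psi-\varphi)=\varphi^{p}+\lambda(\psi-\varphi).
\end{equation*}
Note that $\varphi$ is bounded, since $\varphi=(\lambda/p)^{1/(p-1)}$ with $p\in[p^-,p^+]$. Integrating and using $\int\psi=\int\varphi=1$ gives $\mathcal J(\psi)\ge\mathcal J(\varphi)$. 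Hence $\varphi$ is a minimizer, and the uniqueness statement of Theorem~\ref{thm:existence} yields $\varphi=\varphi_*$ a.e.
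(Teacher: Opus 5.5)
Your proposal is correct and follows essentially the same route as the paper. Both arguments use mass-transfer perturbations $\varphi_*+\varepsilon h$ with $h=\chi_S/|S|-\chi_T/|T|$, supported where $\varphi_*$ is bounded away from $0$ and $\infty$ (which relies on the positivity from Theorem~\ref{thm:existence}), and both obtain the converse from the supporting-line inequality together with uniqueness. The only differences are cosmetic: you use a one-sided perturbation with an explicit second-order Taylor remainder in place of the paper's two-sided differentiation under the integral sign, and by choosing the sets $\{g\le a\}$, $\{g\ge b\}$ before truncating you avoid the paper's gluing of the constants $\lambda_k$ over the exhaustion $E_k$.
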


\begin{proof}
By Theorem~\ref{thm:existence}, $\varphi_*(t)>0$ for almost every $t\in(0,L)$.
Moreover, $\varphi_*\in L^{p(\cdot)}(0,L)\subset L^1(0,L)$, and hence
$\varphi_*$ is finite almost everywhere.

For each integer $k\ge2$, define
\begin{equation*}
E_k
:=
\left\{
t\in(0,L):
\frac1k\le\varphi_*(t)\le k
\right\}.
\end{equation*}
Then
$E_k\subset E_{k+1}$
and, since $\varphi_*(t)\in(0,\infty)$ for almost every
$t\in(0,L)$, we have
\begin{equation*}
\left|(0,L)\setminus\bigcup_{k=2}^{\infty}E_k\right|=0.
\end{equation*}
Fix $k\ge2$. If $S,T\subset E_k$ are disjoint measurable sets of
positive measure, define
\begin{equation}\label{fun-h}
h
=
\frac{\chi_S}{|S|}
-
\frac{\chi_T}{|T|}.
\end{equation}
Then $\int_0^L h(t)dt=0$.
For sufficiently small $|\varepsilon|$, the function
$\varphi_\varepsilon
=
\varphi_*+\varepsilon h$
remains nonnegative and belongs to $\mathcal A$. In particular, both
positive and negative values of $\varepsilon$ sufficiently close to
zero are admissible.

On $S\cup T$, the functions $\varphi_*$ and
$\varphi_\varepsilon$ remain in a fixed compact subinterval of
$(0,\infty)$. Since
$(s,q)\longmapsto q\,s^{q-1}$
is continuous on
\begin{equation*}
\left[\frac{1}{2k},2k\right]\times[p^-,p^+],
\end{equation*}
the difference quotients
\begin{equation*}
\frac{1
}{\varepsilon}\left(\varphi_\varepsilon(t)^{p(t)}
-
\varphi_*(t)^{p(t)}\right)
\end{equation*}
are uniformly bounded for sufficiently small $\varepsilon$. Hence, the
dominated convergence theorem permits differentiation under the
integral sign at $\varepsilon=0$. Since $\varphi_*$ minimizes
$\mathcal J$ and both signs of $\varepsilon$ are admissible, the
resulting derivative must vanish. Thus
\begin{equation*}
\int_0^L
p(t)\varphi_*(t)^{p(t)-1}h(t)\,dt
=
0.
\end{equation*}
Writing
$g(t)
=
p(t)\varphi_*(t)^{p(t)-1}$,
we obtain
\begin{equation}
\label{eq:equal_averages}
\frac{1}{|S|}\int_S g(t)\,dt
=
\frac{1}{|T|}\int_T g(t)\,dt
\end{equation}
for every pair of disjoint measurable subsets $S,T\subset E_k$ of
positive measure.

The function $g$ is essentially bounded on $E_k$. We claim that it is
essentially constant there. Otherwise, there exist real numbers
$a<b$ such that both
$S=\{t\in E_k:g(t)<a\}$
and
$T=\{t\in E_k:g(t)>b\}$
have positive measure. Their averages satisfy
\begin{equation*}
\frac{1}{|S|}\int_Sg(t)\,dt<a<b
<
\frac{1}{|T|}\int_Tg(t)\,dt,
\end{equation*}
contradicting \eqref{eq:equal_averages}. Therefore, there exists a
constant $\lambda_k\in\mathbb R$ such that $g(t)=\lambda_k$ for almost every $t\in E_k$.

Since $E_k\subset E_{k+1}$ and $|E_k|>0$ whenever $E_k$ is nonempty,
the constants $\lambda_k$ agree whenever both sets are nonempty.
Because $\bigcup_{k\ge2}E_k$ has full measure, there is consequently
a single constant $\lambda\in\mathbb R$ such that
\begin{equation*}
p(t)\varphi_*(t)^{p(t)-1}
=
\lambda
\end{equation*}
for almost every $t\in(0,L)$.
Since $p(t)>1$ and $\varphi_*(t)>0$
almost everywhere, we necessarily have $\lambda>0$.
This proves \eqref{eq:EL}.

Conversely, suppose that $\varphi\in\mathcal A$ satisfies
\eqref{eq:EL} for some $\lambda>0$. For any $\psi\in\mathcal A$, the
convexity of $s\mapsto s^{p(t)}$ on $[0,\infty)$ gives, for almost every
$t$,
\begin{equation*}
\psi(t)^{p(t)}
-
\varphi(t)^{p(t)}
\ge
p(t)\varphi(t)^{p(t)-1}
\bigl(\psi(t)-\varphi(t)\bigr).
\end{equation*}
Integrating and using \eqref{eq:EL}, we obtain
\begin{align*}
\mathcal J(\psi)-\mathcal J(\varphi)
\ge
\int_0^L
p(t)\varphi(t)^{p(t)-1}
\bigl(\psi(t)-\varphi(t)\bigr)\,dt
=
\lambda
\int_0^L
\bigl(\psi(t)-\varphi(t)\bigr)\,dt
=
0.
\end{align*}
Thus, $\varphi$ minimizes $\mathcal J$ over $\mathcal A$. The uniqueness
assertion in Theorem~\ref{thm:existence} therefore implies $\varphi=\varphi_*$ a.e. on $(0,L)$.
This completes the proof.
\end{proof}

Next, we provide an explicit representation of the minimizer.
\begin{corollary}
\label{cor:explicit}
The unique minimizer of Definition~\ref{def:1d_problem} is given by
\begin{equation}
\label{eq:explicit}
\varphi_*(t)
=
\left(
\frac{\lambda}{p(t)}
\right)^{1/(p(t)-1)},
\end{equation}
for almost every $t\in(0,L)$,
where the unique parameter $\lambda>0$ is determined by
\begin{equation}
\label{eq:lambda}
\int_0^L
\left(
\frac{\lambda}{p(t)}
\right)^{1/(p(t)-1)}
\,dt
=
1.
\end{equation}
\end{corollary}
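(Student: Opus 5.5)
The plan is to read \eqref{eq:explicit} off from Theorem~\ref{thm:EL} and then settle the uniqueness of $\lambda$ by a monotonicity argument. By Theorem~\ref{thm:EL} there is $\lambda>0$ with $p(t)\varphi_*(t)^{p(t)-1}=\lambda$ for a.e.\ $t$. Since $\varphi_*(t)>0$ a.e.\ (Theorem~\ref{thm:existence}) and $p(t)-1>0$, the map $s\mapsto s^{p(t)-1}$ is a bijection of $(0,\infty)$. Solving pointwise therefore gives $\varphi_*(t)=(\lambda/p(t))^{1/(p(t)-1)}$ a.e. Since $\varphi_*\in\mathcal A$, the normalization $\int_0^L\varphi_*\,dt=1$ is exactly \eqref{eq:lambda}. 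This shows that at least one $\lambda>0$ satisfies \eqref{eq:lambda}.

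For uniqueness of $\lambda$, I will study
\[
\Phi(\lambda):=\int_0^L\Bigl(\frac{\lambda}{p(t)}\Bigr)^{1/(p(t)-1)}dt,\qquad \lambda>0.
\]
For a.e.\ fixed $t$, the integrand is strictly increasing in $\lambda$, because $1/(p(t)-1)\in[1/(p^+-1),1/(p^--1)]$ is positive.

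Finiteness of $\Phi$ comes from the bounds on $p$. For $\lambda\le1$ the integrand is at most $\lambda^{1/(p^+-1)}\cdot\max(1,(p^-)^{-1/(p^+-1)})\le\lambda^{1/(p^+-1)}$. More simply, I will bound it by $C(\lambda)$ using $1<p^-\le p(t)\le p^+$, so $\Phi(\lambda)<\infty$ for every $\lambda$.

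Strict monotonicity then follows: if $\lambda_1<\lambda_2$, the integrands satisfy a strict pointwise inequality a.e., so $\Phi(\lambda_1)<\Phi(\lambda_2)$ since $L>0$. Hence $\Phi(\lambda)=1$ has at most one solution. Together with the existence step, exactly one $\lambda$ solves \eqref{eq:lambda}, and it coincides with the $\lambda$ from Theorem~\ref{thm:EL}.

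Alternatively, uniqueness follows from the converse part of Theorem~\ref{thm:EL}. If $\lambda'$ also satisfies \eqref{eq:lambda}, the corresponding function lies in $\mathcal A$, and it is in $L^{p(\cdot)}$ because it is bounded. It then equals $\varphi_*$ a.e., which forces $\lambda'=\lambda$ after inverting the relation on a set of positive measure.

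As a supplementary check, I would also record that $\Phi$ is continuous by dominated convergence on compact $\lambda$-intervals, with $\Phi(\lambda)\to0$ as $\lambda\to0^+$ and $\Phi(\lambda)\to\infty$ as $\lambda\to\infty$. This gives existence of a solution independently of Theorem~\ref{thm:EL}.

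The only mildly delicate point is membership in $\mathcal A$, namely that the explicit function lies in $L^{p(\cdot)}(0,L)$. This is immediate because it is essentially bounded by $\max(\lambda,1)^{1/(p^--1)}$ on a finite interval. I expect no real obstacle; the substance is already contained in Theorem~\ref{thm:EL}.
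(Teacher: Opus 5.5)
Your proposal is correct and follows the paper's proof: you solve the Euler--Lagrange relation of Theorem~\ref{thm:EL} pointwise, then get uniqueness of $\lambda$ from the strict monotonicity of $\lambda\mapsto\int_0^L(\lambda/p(t))^{1/(p(t)-1)}\,dt$. The paper additionally proves continuity and the limits $0$ and $\infty$, and obtains existence of $\lambda$ from the intermediate value theorem; as you observe, that existence is already supplied by the normalization of $\varphi_*$.
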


\begin{proof}
The representation \eqref{eq:explicit} follows immediately from
\eqref{eq:EL} by solving pointwise for $\varphi_*(t)$. The positivity
of $\lambda$ follows from
\begin{equation*}
\lambda
=
p(t)\varphi_*(t)^{p(t)-1}>0
\end{equation*}
for almost every $t$.

It remains to prove that \eqref{eq:lambda} determines $\lambda$
uniquely. Define
\begin{equation*}
F(\lambda)
:=
\int_0^L
\left(
\frac{\lambda}{p(t)}
\right)^{1/(p(t)-1)}
\,dt,
\qquad
\lambda>0.
\end{equation*}
For every fixed $t$, the function
\begin{equation*}
\lambda
\longmapsto
\left(
\frac{\lambda}{p(t)}
\right)^{1/(p(t)-1)}
\end{equation*}
is continuous and strictly increasing on $(0,\infty)$. Consequently,
$F$ is strictly increasing.

Next, we verify continuity. Fix
\begin{equation*}
0<\lambda_1\le\lambda\le\lambda_2<\infty
\end{equation*}
and set
\begin{equation*}
a=\frac{\lambda_1}{p^+},
\qquad
b=\frac{\lambda_2}{p^-},
\qquad
\alpha=\frac{1}{p^+-1},
\qquad
\beta=\frac{1}{p^--1}.
\end{equation*}
Then
\begin{equation*}
0<a\le\frac{\lambda}{p(t)}\le b
\end{equation*}
and
\begin{equation*}
\alpha
\le
\frac{1}{p(t)-1}
\le
\beta
\end{equation*}
for almost every $t$. Hence
\begin{equation*}
\left(
\frac{\lambda}{p(t)}
\right)^{1/(p(t)-1)}
\le
C(\lambda_1,\lambda_2),
\end{equation*}
where
\begin{equation*}
C(\lambda_1,\lambda_2)
:=
\max\{a^\alpha,a^\beta,b^\alpha,b^\beta\}<\infty.
\end{equation*}

Since $(0,L)$ has finite measure, this is an integrable bound
independent of $\lambda\in[\lambda_1,\lambda_2]$. Dominated convergence
therefore shows that $F$ is continuous on $(0,\infty)$.

As $\lambda\downarrow0$, the integrand converges pointwise to zero.
For $0<\lambda\le1$, it is bounded above by the corresponding
integrand with $\lambda=1$, which belongs to $L^1(0,L)$ by the same
estimate. Hence dominated convergence gives
$\lim_{\lambda\downarrow0}F(\lambda)=0.$
On the other hand, for every $t\in(0,L)$,
\begin{equation*}
\left(
\frac{\lambda}{p(t)}
\right)^{1/(p(t)-1)}
\longrightarrow\infty
\qquad\text{as }\lambda\to\infty.
\end{equation*}
Since the integrand is nonnegative and increases with $\lambda$, the
monotone convergence theorem yields
$\lim_{\lambda\to\infty}F(\lambda)=\infty.$
Thus, $F:(0,\infty)\to(0,\infty)$ is continuous and strictly increasing,
with $\lim_{\lambda\downarrow0}F(\lambda)=0,$ and $\lim_{\lambda\to\infty}F(\lambda)=\infty.$
The intermediate value theorem therefore gives a unique
$\lambda>0$ satisfying \eqref{eq:lambda}. The proof is now complete.
\end{proof}

\subsection{Reduction to radial densities for the annulus}
\label{sec:annulus}

We first record the one-dimensional projection inequality that will be
used in the radial reduction.

\begin{lemma}
\label{lem:projection}
Let $0<r_1<r_2<\infty$ and $T>0$. Let $\mu:[0,T]\longrightarrow[r_1,r_2]$
be absolutely continuous with $\mu(0)=r_1$ and $\mu(T)=r_2.$
If $\varphi:[r_1,r_2]\to[0,\infty]$ is Borel measurable, then
\begin{equation}
\label{eq:projection}
\int_0^T
\varphi(\mu(s))\,|\mu'(s)|\,ds
\ge
\int_{r_1}^{r_2}\varphi(r)\,dr.
\end{equation}
\end{lemma}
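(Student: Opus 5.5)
The plan is to use the area (Banach indicatrix) formula for absolutely continuous functions of one variable. For an absolutely continuous $\mu:[0,T]\to\mathbb{R}$ and a nonnegative Borel $\varphi$, it states
\begin{equation*}
\int_0^T \varphi(\mu(s))\,|\mu'(s)|\,ds
=
\int_{\mathbb{R}} \varphi(r)\,N(\mu,r)\,dr,
\qquad
N(\mu,r):=\#\{s\in[0,T]:\mu(s)=r\}.
\end{equation*}
Since $\mu$ is continuous with $\mu(0)=r_1$ and $\mu(T)=r_2$, the intermediate value theorem gives $N(\mu,r)\ge1$ for every $r\in[r_1,r_2]$. Because $\varphi\ge0$, the lower bound $\int_{r_1}^{r_2}\varphi(r)\,dr$ follows at once. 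This is the same change-of-variables inequality already invoked in Example~\ref{ex:cylinder}, so I will present it as a general fact, with the following elementary proof to keep things self-contained.

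First I treat $\varphi=\chi_{I}$ for a subinterval $I=[a,b]\subset[r_1,r_2]$. Set $s_a:=\max\{s:\mu(s)=a\}$ and $s_b:=\min\{s\ge s_a:\mu(s)=b\}$, both of which exist by continuity and the intermediate value theorem. Then $\mu([s_a,s_b])\subset[a,b]$: after $s_a$ the function never returns to $a$, so it stays above $a$, and before $s_b$ it stays below $b$. Using absolute continuity,
\begin{equation*}
\int_0^T\chi_I(\mu(s))|\mu'(s)|\,ds
\ge
\int_{s_a}^{s_b}|\mu'(s)|\,ds
\ge
\mu(s_b)-\mu(s_a)
=
b-a.
\end{equation*}
Second, for a finite disjoint union of intervals $I_1,\dots,I_m$, I repeat this construction separately for each $I_j$. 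The resulting time windows $[s_{a_j},s_{b_j}]$ are pairwise disjoint, since each lies in $\mu^{-1}(I_j)$ up to the endpoints and the $I_j$ are disjoint; endpoint overlaps are harmless. So the inequality holds for step functions with nonnegative coefficients, namely $\varphi=\sum c_j\chi_{I_j}$ with the $I_j$ disjoint.

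Third, I pass from step functions to arbitrary Borel functions. Both sides of \eqref{eq:projection} define measures in $\varphi=\chi_B$: the left side is $\nu(B):=\int_0^T\chi_B(\mu)|\mu'|\,ds$, a Borel measure on $[r_1,r_2]$, and the right side is Lebesgue measure $|B|$. I have shown $\nu\ge|\cdot|$ on finite unions of intervals, and this class is an algebra generating the Borel sets. Outer regularity then transfers the inequality. Concretely, Lebesgue measure of any open $U$ is approximated from below by finite unions of intervals, so $\nu(U)\ge|U|$. Approximating a Borel $B$ from outside by open sets inside $\nu$'s outer regularity (both are finite measures on a compact interval) gives $\nu(B)\ge|B|$. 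Finally, I extend to nonnegative simple functions by linearity and to general Borel $\varphi:[r_1,r_2]\to[0,\infty]$ by monotone convergence.

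The main obstacle is the bookkeeping that makes the windows for different intervals disjoint, together with checking that $\nu$ is a genuine measure. For the latter, $|\mu'|\in L^1$ because $\mu$ is absolutely continuous, and $s\mapsto\chi_B(\mu(s))$ is Borel, so countable additivity comes from monotone convergence. If the disjointness argument becomes delicate, I would instead cite the Banach indicatrix theorem directly; that theorem needs no monotonicity of $\mu$.
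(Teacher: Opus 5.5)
Your proposal is correct, and its main argument is the paper's: the one-dimensional area (Banach indicatrix) formula for absolutely continuous $\mu$, combined with $N(\mu,r)\ge1$ on $(r_1,r_2)$ from the intermediate value theorem. Your supplementary elementary proof (last-exit/first-hit windows for one interval, then outer regularity of the finite measure $\nu$, then monotone convergence) is also sound. The window-disjointness bookkeeping you flag as the main obstacle is not needed: the left side of \eqref{eq:projection} is linear in $\varphi$, so the single-interval bound already gives the step-function case.
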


\begin{proof}
The one-dimensional area formula for absolutely continuous functions
gives
\begin{equation*}
\int_0^T
\varphi(\mu(s))\,|\mu'(s)|\,ds
=
\int_{\mathbb R}
\varphi(r)N_\mu(r)\,dr,
\end{equation*}
where
\begin{equation*}
N_\mu(r)
:=
\#\{s\in[0,T]:\mu(s)=r\}
\end{equation*}
is the multiplicity function. Since $\mu$ is continuous and joins
$r_1$ to $r_2$, the intermediate value theorem implies $N_\mu(r)\ge1$
 for every $r\in(r_1,r_2)$.
Because $\varphi\ge0$, we therefore obtain
\begin{equation*}
\int_{\mathbb R}
\varphi(r)N_\mu(r)\,dr
\ge
\int_{r_1}^{r_2}\varphi(r)\,dr,
\end{equation*}
which proves \eqref{eq:projection}.
\end{proof}

To exploit the radial symmetry of the exponent, we first show that
spherical averaging preserves admissibility while not increasing the
variable-exponent energy. This yields a reduction of the modulus problem
to radial densities.
\begin{lemma}
\label{lem:radial_suffices}
Let
\begin{equation*}
A(r_1,r_2)
=
\{x\in\mathbb R^n:r_1<|x|<r_2\},
\qquad
0<r_1<r_2<\infty,
\end{equation*}
and assume that
$p(x)=q_0(|x|)$
for a measurable function
$q_0:(r_1,r_2)\longrightarrow(1,\infty)$
satisfying
\begin{equation}
\label{eq:annulus_exp_bounds}
1<q_0^-\le q_0(r)\le q_0^+<\infty
\end{equation}
for almost every $r\in(r_1,r_2)$.
Let $\Gamma$ be the family of locally rectifiable curves in
$\overline{A(r_1,r_2)}$ whose endpoints belong to
$S_{r_1}^{n-1}$ and $S_{r_2}^{n-1}$, and whose interiors
lie in $A(r_1,r_2)$.

For $\rho\in\mathcal F(\Gamma)$, define its spherical average by
\begin{equation}
\label{eq:spherical_average}
\varphi(r)
=
\frac{1}{\omega_{n-1}}
\int_{S^{n-1}}
\rho(r\theta)\,d\sigma(\theta),
\qquad
r\in(r_1,r_2),
\end{equation}
and set
\begin{equation}
\label{eq:radial_average_density}
\widetilde\rho(x)
=
\varphi(|x|).
\end{equation}
Then $\varphi$ is Lebesgue measurable,
$\widetilde\rho\in\mathcal F(\Gamma)$, and
\begin{equation}
\label{eq:radial_energy_reduction}
\int_{A(r_1,r_2)}
\widetilde\rho(x)^{p(x)}\,dx
\le
\int_{A(r_1,r_2)}
\rho(x)^{p(x)}\,dx.
\end{equation}
Consequently,
\begin{equation}
\label{eq:radial_infimum}
M_{p(\cdot)}(\Gamma)
=
\inf\left\{
\int_{A(r_1,r_2)}
\rho(x)^{p(x)}\,dx:
\rho\in\mathcal F(\Gamma),
\ \rho\text{ is radial}
\right\}.
\end{equation}
\end{lemma}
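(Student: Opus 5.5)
Since the spherical average of a nonnegative Borel function need not be Borel, we may replace $\rho$ by a Borel majorant with the same integrals where needed. Measurability of $\varphi$ follows from Tonelli's theorem in polar coordinates $x=r\theta$, $dx=r^{n-1}\,dr\,d\sigma(\theta)$. The function $(r,\theta)\mapsto\rho(r\theta)$ is Borel on $(r_1,r_2)\times S^{n-1}$, so $r\mapsto\int_{S^{n-1}}\rho(r\theta)\,d\sigma$ is Borel measurable with values in $[0,\infty]$, and $\widetilde\rho$ is Borel as well.

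\textbf{Admissibility.} The plan is to use radial segments. For each $\theta\in S^{n-1}$, the segment $\gamma_\theta(r)=r\theta$, $r\in[r_1,r_2]$, belongs to $\Gamma$, so admissibility gives $\int_{r_1}^{r_2}\rho(r\theta)\,dr\ge1$ for every $\theta$. Integrating over $S^{n-1}$ and applying Tonelli yields $\int_{r_1}^{r_2}\varphi(r)\,dr\ge1$.

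Now let $\gamma\in\Gamma$ be arbitrary and parametrize it by arc length on $[0,T]$ (after restricting to compact subarcs and exhausting, by local rectifiability). The function $\mu(s)=|\gamma(s)|$ is $1$-Lipschitz, hence $|\mu'|\le|\gamma'|=1$ a.e. Reversing orientation if necessary, $\mu(0)=r_1$ and $\mu(T)=r_2$. Lemma~\ref{lem:projection} then gives
\[
\int_\gamma\widetilde\rho\,ds\ge\int_0^T\varphi(\mu(s))|\mu'(s)|\,ds\ge\int_{r_1}^{r_2}\varphi\,dr\ge1 .
\]
So $\widetilde\rho\in\mathcal F(\Gamma)$. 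The same argument as in Example~\ref{ex:cylinder} handles the endpoints lying on the boundary spheres.

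\textbf{Energy reduction.} In polar coordinates,
\[
\int_A\rho^{p}\,dx=\int_{r_1}^{r_2}r^{n-1}\int_{S^{n-1}}\rho(r\theta)^{q_0(r)}\,d\sigma\,dr .
\]
For fixed $r$, the map $s\mapsto s^{q_0(r)}$ is convex on $[0,\infty]$, since the exponent is constant on the sphere $S_r^{n-1}$. This is the one point where radial dependence of $p$ is essential. Jensen's inequality with respect to the probability measure $\sigma/\omega_{n-1}$ gives
\[
\varphi(r)^{q_0(r)}\le\frac1{\omega_{n-1}}\int_{S^{n-1}}\rho(r\theta)^{q_0(r)}\,d\sigma .
\]
Jensen remains valid when $\varphi(r)=\infty$, since then the right-hand side is also infinite. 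Multiplying by $\omega_{n-1}r^{n-1}$ and integrating in $r$ yields \eqref{eq:radial_energy_reduction}. The left-hand side is exactly $\int_A\widetilde\rho^{\,p}\,dx=\omega_{n-1}\int r^{n-1}\varphi^{q_0}\,dr$.

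\textbf{The infimum formula.} Radial admissible densities form a subclass of $\mathcal F(\Gamma)$, which gives the inequality $\le$ in \eqref{eq:radial_infimum}. Conversely, every $\rho\in\mathcal F(\Gamma)$ yields a radial $\widetilde\rho\in\mathcal F(\Gamma)$ with no larger energy, which gives $\ge$.

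The main obstacle is the admissibility step for a general curve. There one must handle the orientation, the subarc and endpoint conventions, and the justification $|\mu'|\le1$. This is resolved by the Lipschitz property of $|\cdot|$ together with Lemma~\ref{lem:projection}.
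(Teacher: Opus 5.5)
Your proposal is correct and matches the paper's proof step for step: Tonelli in polar coordinates for measurability, radial segments together with Lemma~\ref{lem:projection} applied to $\mu=|\gamma|$ for admissibility, Jensen's inequality on each sphere for the energy bound, and the subclass/averaging comparison for the infimum. One slip: your opening claim that spherical averages of Borel functions need not be Borel is false, since your own Tonelli argument on the product Borel $\sigma$-algebra shows $\varphi$ and $\widetilde\rho$ are Borel. The Borel-majorant replacement is therefore unnecessary.
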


\begin{proof}
We first verify that the spherical average is measurable. Since $\rho$
is nonnegative and Borel measurable, the function
\begin{equation}\label{non-neg mea fun r theta}
(r,\theta)\longmapsto \rho(r\theta)
\end{equation}
is measurable on $(r_1,r_2)\times S^{n-1}$. By Tonelli's theorem \cite[Theorem 2.37]{Folland1999}, applied to the above nonnegative measurable function \eqref{non-neg mea fun r theta}
on $(r_1,r_2)\times S^{n-1}$, the function
\begin{equation*}
r\longmapsto
\int_{S^{n-1}}\rho(r\theta)\,d\sigma(\theta)
\end{equation*}
is measurable. 
Hence $\varphi$ and
$\widetilde\rho(x)=\varphi(|x|)$ are measurable.

Next we prove admissibility. Let
\begin{equation*}
\gamma:[0,T]\longrightarrow\overline{A(r_1,r_2)}
\end{equation*}
be a curve in $\Gamma$, parametrized by arc length, with
\begin{equation*}
\gamma(0)\in S_{r_1}^{n-1},
\qquad
\gamma(T)\in S_{r_2}^{n-1}.
\end{equation*}
Set
$\mu(s)=|\gamma(s)|$.
Since the map $x\mapsto|x|$ is $1$-Lipschitz, $\mu$ is absolutely
continuous and $|\mu'(s)|\le|\gamma'(s)|$ for almost every $s\in(0,T).$
Moreover, $\mu(0)=r_1$, and $\mu(T)=r_2$.
Therefore,
\begin{equation*}
\int_\gamma\widetilde\rho\,ds
=
\int_0^T
\varphi(\mu(s))|\gamma'(s)|\,ds
\ge
\int_0^T
\varphi(\mu(s))|\mu'(s)|\,ds.
\end{equation*}
Lemma~\ref{lem:projection} consequently gives
\begin{equation}
\label{eq:radial_curve_lower_bound}
\int_\gamma\widetilde\rho\,ds
\ge
\int_{r_1}^{r_2}\varphi(r)\,dr.
\end{equation}

It remains to verify that the last integral is at least one. For each
$\theta\in S^{n-1}$, the radial segment $\gamma_\theta(r)=r\theta$, $r\in[r_1,r_2]$,
belongs to $\Gamma$. Since $\rho\in\mathcal F(\Gamma)$, we have
\begin{equation*}
\int_{r_1}^{r_2}\rho(r\theta)\,dr\ge1
\end{equation*}
for every $\theta\in S^{n-1}$.
Integrating with respect to the normalized surface measure gives
\begin{align*}
\int_{r_1}^{r_2}\varphi(r)\,dr
=
\frac{1}{\omega_{n-1}}
\int_{S^{n-1}}
\left(
\int_{r_1}^{r_2}\rho(r\theta)\,dr
\right)d\sigma(\theta)
\ge
\frac{1}{\omega_{n-1}}
\int_{S^{n-1}}1\,d\sigma(\theta)
=
1.
\end{align*}
Here, Tonelli's theorem justifies the interchange of the integrals,
since $\rho\ge0$. Combining this with
\eqref{eq:radial_curve_lower_bound} shows that
$\int_\gamma\widetilde\rho\,ds\ge1$
for every $\gamma\in\Gamma$. Hence
$\widetilde\rho\in\mathcal F(\Gamma)$.

We now compare the modular energies. For almost every
$r\in(r_1,r_2)$, the function
$s\longmapsto s^{q_0(r)}$
is convex on $[0,\infty)$ because $q_0(r)>1$. Jensen's inequality
with respect to the normalized surface measure
$d\sigma/\omega_{n-1}$ on $S^{n-1}$ gives
\begin{equation*}
\varphi(r)^{q_0(r)}
\le
\frac{1}{\omega_{n-1}}
\int_{S^{n-1}}
\rho(r\theta)^{q_0(r)}
\,d\sigma(\theta).
\end{equation*}
Multiplying by $\omega_{n-1}r^{n-1}$ and integrating with respect to
$r$, we obtain
\begin{align*}
\int_{A(r_1,r_2)}
\widetilde\rho(x)^{p(x)}\,dx
&=
\omega_{n-1}
\int_{r_1}^{r_2}
\varphi(r)^{q_0(r)}r^{n-1}\,dr
\le
\int_{r_1}^{r_2}
\int_{S^{n-1}}
\rho(r\theta)^{q_0(r)}
r^{n-1}\,d\sigma(\theta)\,dr
\\
&=
\int_{A(r_1,r_2)}
\rho(x)^{p(x)}\,dx.
\end{align*}
This proves \eqref{eq:radial_energy_reduction}.

Finally, let
\begin{equation*}
M_{p(\cdot)}^{\mathrm{rad}}(\Gamma)
:=
\inf\left\{
\int_{A(r_1,r_2)}
\rho(x)^{p(x)}\,dx:
\rho\in\mathcal F(\Gamma),
\ \rho\text{ is radial}
\right\}.
\end{equation*}
Since the radial admissible densities form a subclass of
$\mathcal F(\Gamma)$,
\begin{equation*}
M_{p(\cdot)}(\Gamma)
\le
M_{p(\cdot)}^{\mathrm{rad}}(\Gamma).
\end{equation*}
On the other hand, for every $\rho\in\mathcal F(\Gamma)$, the spherical
average $\widetilde\rho$ is radial, admissible, and satisfies
\begin{equation*}
\int_{A(r_1,r_2)}
\widetilde\rho^{\,p}\,dx
\le
\int_{A(r_1,r_2)}
\rho^{\,p}\,dx.
\end{equation*}
Taking the infimum over all $\rho\in\mathcal F(\Gamma)$ gives
\begin{equation*}
M_{p(\cdot)}^{\mathrm{rad}}(\Gamma)
\le
M_{p(\cdot)}(\Gamma).
\end{equation*}
Thus
\begin{equation*}
M_{p(\cdot)}^{\mathrm{rad}}(\Gamma)
=
M_{p(\cdot)}(\Gamma),
\end{equation*}
which proves \eqref{eq:radial_infimum}. We are done.
\end{proof}

\subsection{One-dimensional formulation and extremal density for the annulus}
The preceding radial reduction converts the original modulus problem into a
one-dimensional constrained minimization problem, whose existence and
uniqueness properties are summarized in the following theorem.

\begin{theorem}
\label{thm:1d_modulus}
Under the assumptions of Lemma~\ref{lem:radial_suffices}, the modular
variable exponent modulus admits the one-dimensional representation
\begin{equation}
\label{eq:annulus_1d_reduction}
M_{p(\cdot)}(\Gamma)
=
\inf_{\varphi\in\mathcal A_r} J_r(\varphi),
\end{equation}
where
\begin{equation}
\label{eq:annulus_admissible_class}
\mathcal A_r
=
\left\{
\varphi\in L^{q_0(\cdot)}(r_1,r_2):
\ \varphi\ge0\ \text{a.e.},\
\int_{r_1}^{r_2}\varphi(r)\,dr=1
\right\},
\end{equation}
and
\begin{equation}
\label{eq:annulus_1d_energy}
J_r(\varphi)
=
\omega_{n-1}
\int_{r_1}^{r_2}
\varphi(r)^{q_0(r)}r^{n-1}\,dr.
\end{equation}
Moreover, $J_r$ admits a unique minimizer in $\mathcal A_r$.
\end{theorem}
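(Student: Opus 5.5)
The plan is to combine Lemma~\ref{lem:radial_suffices} with a two-sided comparison between radial admissible densities and elements of $\mathcal A_r$. Then we adapt the existence and uniqueness argument of Theorem~\ref{thm:existence} to the weighted energy $J_r$.

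\emph{Upper bound.} Let $\varphi\in\mathcal A_r$ and set $\rho(x)=\varphi(|x|)$, with a Borel representative of $\varphi$. We claim that $\rho\in\mathcal F(\Gamma)$. For $\gamma\in\Gamma$ parametrized by arc length, put $\mu=|\gamma|$. As in the proof of Lemma~\ref{lem:radial_suffices}, the inequality $|\mu'|\le|\gamma'|$ and Lemma~\ref{lem:projection} give
\[
\int_\gamma\rho\,ds\ \ge\ \int_{r_1}^{r_2}\varphi\,dr=1 .
\]
Polar coordinates give $\int_{A}\rho^{p}\,dx=J_r(\varphi)$. Hence $M_{p(\cdot)}(\Gamma)\le\inf_{\mathcal A_r}J_r$.

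\emph{Lower bound.} By \eqref{eq:radial_infimum} it suffices to consider a radial $\rho(x)=\psi(|x|)$ in $\mathcal F(\Gamma)$ with finite energy; otherwise there is nothing to prove. Testing admissibility on a radial segment gives $c:=\int_{r_1}^{r_2}\psi\,dr\ge1$. Finite energy together with $r^{n-1}\ge r_1^{n-1}>0$ gives $\int\psi^{q_0}\,dr<\infty$, so $\psi\in L^{q_0(\cdot)}(r_1,r_2)$ and $c<\infty$. Set $\varphi=\psi/c\in\mathcal A_r$. Since $\varphi\le\psi$ pointwise, we get $J_r(\varphi)\le J_r(\psi)=\int_A\rho^p\,dx$. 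Taking infima yields \eqref{eq:annulus_1d_reduction}.

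\emph{Existence and uniqueness.} The weight $w(r)=\omega_{n-1}r^{n-1}$ satisfies
\[
\omega_{n-1}r_1^{n-1}\le w\le \omega_{n-1}r_2^{n-1}.
\]
Consequently, a bound on $J_r$ bounds the unweighted modular $\int\varphi^{q_0}\,dr$, and hence the Luxemburg norm in $L^{q_0(\cdot)}(r_1,r_2)$. We then repeat the proof of Theorem~\ref{thm:existence} verbatim on the interval $(r_1,r_2)$. Take a minimizing sequence and extract a weakly convergent subsequence by reflexivity. The class $\mathcal A_r$ is convex and closed, thanks to the embedding into $L^1$, so the limit stays in $\mathcal A_r$. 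The functional $J_r$ is convex and continuous, because of the bounded weight and $p^+<\infty$, hence weakly lower semicontinuous, and the limit is a minimizer. Strict convexity of $s\mapsto w(r)s^{q_0(r)}$ for a.e.\ $r$ gives uniqueness, exactly as in the midpoint argument of Theorem~\ref{thm:existence}. An alternative route is the change of measure $w\,dr$, but direct repetition is simpler.

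The main point requiring care is the lower bound. There one must check that a radial admissible density, after normalization, lies in $L^{q_0(\cdot)}$ and does not gain energy. Both facts follow from the positive lower bound on the weight and the pointwise inequality $\varphi\le\psi$. Continuity of $J_r$ on $L^{q_0(\cdot)}$, used for weak lower semicontinuity, is standard under $p^+<\infty$. If it is not available, I would argue lower semicontinuity directly: pass via Mazur's lemma to convex combinations converging strongly and a.e., and apply Fatou's lemma.
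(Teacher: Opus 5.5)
Your proposal is correct and follows essentially the paper's own proof: the upper bound via Lemma~\ref{lem:projection} and polar coordinates, the lower bound via Lemma~\ref{lem:radial_suffices}, radial segments and normalization by $c\ge1$, and existence and uniqueness by the reflexivity, weak lower semicontinuity and strict convexity argument, using that the weight lies between $\omega_{n-1}r_1^{n-1}$ and $\omega_{n-1}r_2^{n-1}$. Your explicit checks that the normalized profile lies in $L^{q_0(\cdot)}(r_1,r_2)$ and that a Borel representative of $\varphi$ is used are small refinements that the paper leaves implicit.
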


\begin{proof}
By Lemma~\ref{lem:radial_suffices}, it is sufficient to consider
radial admissible densities of the form
\begin{equation*}
\rho(x)=\varphi(|x|),
\qquad
\varphi\ge0.
\end{equation*}
For such a density, the polar-coordinate formula and the relation
$p(x)=q_0(|x|)$ give
\begin{equation}
\label{eq:radial_energy_identity}
\int_{A(r_1,r_2)}
\rho(x)^{p(x)}\,dx
=
\omega_{n-1}
\int_{r_1}^{r_2}
\varphi(r)^{q_0(r)}r^{n-1}\,dr
=
J_r(\varphi).
\end{equation}

We first identify precisely which radial profiles are admissible.
Suppose that
\begin{equation*}
\int_{r_1}^{r_2}\varphi(r)\,dr\ge1.
\end{equation*}
For any $\gamma\in\Gamma$, let $\mu(s)=|\gamma(s)|$. Since
$\mu$ is absolutely continuous and
$|\mu'(s)|\le|\gamma'(s)|$ for almost every $s$, Lemma~\ref{lem:projection}
yields
\begin{equation*}
\int_\gamma \rho\,ds
=
\int_0^T
\varphi(\mu(s))|\gamma'(s)|\,ds
\ge
\int_0^T
\varphi(\mu(s))|\mu'(s)|\,ds
\ge
\int_{r_1}^{r_2}\varphi(r)\,dr
\ge1.
\end{equation*}
Thus $\rho\in\mathcal F(\Gamma)$.

Conversely, suppose that $\rho(x)=\varphi(|x|)$ belongs to
$\mathcal F(\Gamma)$. Fix $\theta\in S^{n-1}$ and consider the radial
segment $\gamma_\theta(r)=r\theta,$ $r\in[r_1,r_2]$.
This curve belongs to $\Gamma$, and its arc-length parameter is precisely
the radial variable. Hence
\begin{equation*}
1
\le
\int_{\gamma_\theta}\rho\,ds
=
\int_{r_1}^{r_2}\varphi(r)\,dr.
\end{equation*}
Therefore,
\begin{equation}
\label{eq:radial_admissibility_characterization}
\rho(x)=\varphi(|x|)\in\mathcal F(\Gamma)
\quad\Longleftrightarrow\quad
\int_{r_1}^{r_2}\varphi(r)\,dr\ge1.
\end{equation}

Consequently, Lemma~\ref{lem:radial_suffices} and
\eqref{eq:radial_energy_identity} imply
\begin{equation}
\label{eq:annulus_inf_geq}
M_{p(\cdot)}(\Gamma)
=
\inf
\left\{
J_r(\varphi):
\varphi\ge0,\
\int_{r_1}^{r_2}\varphi(r)\,dr\ge1
\right\}.
\end{equation}

The constraint can be replaced by equality. Indeed, let $\varphi$ be
a nonnegative profile with finite $J_r(\varphi)$ and put
\begin{equation*}
c=\int_{r_1}^{r_2}\varphi(r)\,dr.
\end{equation*}
If $c>1$, define $\widehat\varphi=\varphi/c$. Then
$\int_{r_1}^{r_2}\widehat\varphi\,dr=1$, and
\begin{equation*}
J_r(\widehat\varphi)
=
\omega_{n-1}
\int_{r_1}^{r_2}
\varphi(r)^{q_0(r)}
c^{-q_0(r)}r^{n-1}\,dr.
\end{equation*}
Since $c>1$ and $q_0(r)>1$ for almost every $r$,
\begin{equation*}
c^{-q_0(r)}<1
\qquad\text{for a.e. }r\in(r_1,r_2),
\end{equation*}
and therefore $J_r(\widehat\varphi)\le J_r(\varphi).$
In fact, the inequality is strict whenever $J_r(\varphi)>0$.
Thus, profiles with integral strictly larger than $1$ cannot yield the
infimum. It follows from \eqref{eq:annulus_inf_geq} that
\begin{equation*}
M_{p(\cdot)}(\Gamma)
=
\inf_{\varphi\in\mathcal A_r}J_r(\varphi),
\end{equation*}
which proves \eqref{eq:annulus_1d_reduction}.

It remains to prove existence and uniqueness of the minimizer. Let
\begin{equation*}
m_r=\inf_{\varphi\in\mathcal A_r}J_r(\varphi),
\end{equation*}
and choose a minimizing sequence $(\varphi_k)\subset\mathcal A_r$ such
that
\begin{equation*}
J_r(\varphi_k)\longrightarrow m_r.
\end{equation*}
After discarding finitely many terms, we may assume that
$J_r(\varphi_k)\le M_0$ for some constant $M_0>0$. Since
$r\ge r_1>0$ on $(r_1,r_2)$,
\begin{equation*}
\int_{r_1}^{r_2}
\varphi_k(r)^{q_0(r)}\,dr
\le
\frac{M_0}{\omega_{n-1}r_1^{\,n-1}}.
\end{equation*}
The modular--norm comparison for variable exponent spaces therefore
shows that $(\varphi_k)$ is bounded in
$L^{q_0(\cdot)}(r_1,r_2)$. Since
\begin{equation*}
1<q_0^-\le q_0(r)\le q_0^+<\infty,
\end{equation*}
the space $L^{q_0(\cdot)}(r_1,r_2)$ is reflexive. Hence, after passing to
a subsequence, $\varphi_k\rightharpoonup\varphi_*$ weakly in $L^{q_0(\cdot)}(r_1,r_2)$
for some $\varphi_*\in L^{q_0(\cdot)}(r_1,r_2)$.

The admissible class $\mathcal A_r$ is convex. It is also strongly
closed because the embedding
\begin{equation*}
L^{q_0(\cdot)}(r_1,r_2)\hookrightarrow L^1(r_1,r_2)
\end{equation*}
is continuous under the present uniform lower bound $q_0^->1$.
Consequently, the constraint
$\int_{r_1}^{r_2}\varphi(r)\,dr=1$
is preserved under strong convergence, while nonnegativity is preserved
by passing to an almost-everywhere convergent subsequence. Since a
closed convex subset of a Banach space is weakly closed, it follows that
$\varphi_*\in\mathcal A_r$.

The functional $J_r$ is convex, since
$s\mapsto s^{q_0(r)}$ is strictly convex on $[0,\infty)$ for almost every
$r$, and the weight $r^{n-1}$ is bounded above and below by positive
constants on $[r_1,r_2]$. Moreover, the corresponding integral modular
is continuous with respect to the Luxemburg norm on
$L^{q_0(\cdot)}(r_1,r_2)$. Hence, $J_r$ is weakly lower semicontinuous.
Therefore,
\begin{equation*}
J_r(\varphi_*)
\le
\liminf_{k\to\infty}J_r(\varphi_k)
=
m_r.
\end{equation*}
Since $\varphi_*\in\mathcal A_r$, the reverse inequality
$J_r(\varphi_*)\ge m_r$ is immediate. Thus,
$J_r(\varphi_*)=m_r$,
and $\varphi_*$ is a minimizer.

Finally, suppose that $\varphi_1,\varphi_2\in\mathcal A_r$ are two
distinct minimizers. Then they differ on a measurable set of positive
measure. For almost every $r$, the function
$s\mapsto s^{q_0(r)}$ is strictly convex on $[0,\infty)$, and
$r^{n-1}>0$. Hence
\begin{equation*}
\left(\frac{\varphi_1(r)+\varphi_2(r)}{2}\right)^{q_0(r)}
<
\frac{\varphi_1(r)^{q_0(r)}
+\varphi_2(r)^{q_0(r)}}{2}
\end{equation*}
on a set of positive measure. Since
$(\varphi_1+\varphi_2)/2\in\mathcal A_r$, integration gives
\begin{equation*}
J_r\left(\frac{\varphi_1+\varphi_2}{2}\right)
<
\frac{J_r(\varphi_1)+J_r(\varphi_2)}{2}
=
m_r,
\end{equation*}
which contradicts the definition of $m_r$. Thus, the minimizer is unique.
\end{proof}

The one-dimensional variational problem can now be solved explicitly by deriving
the Euler--Lagrange equation and determining the associated normalization constant.
\begin{theorem}
\label{thm:annulus_formula}
Under the assumptions of Lemma~\ref{lem:radial_suffices}, let
$\rho_*$ denote the unique minimizer of $J_r$ in $\mathcal A_r$.
Then $\rho_*>0$ almost everywhere in $(r_1,r_2)$ and there exists a
unique constant $\lambda>0$ such that
\begin{equation}
\label{eq:annulus_EL}
\omega_{n-1}q_0(r)r^{n-1}
\rho_*(r)^{q_0(r)-1}
=
\lambda
\end{equation}
for almost every $r\in(r_1,r_2)$.
Consequently,
\begin{equation}
\label{eq:annulus_extr}
\rho_*(r)
=
\left(
\frac{\lambda}
{\omega_{n-1}q_0(r)r^{n-1}}
\right)^{\!1/(q_0(r)-1)}
\end{equation}
for almost every $r\in(r_1,r_2)$,
where $\lambda>0$ is uniquely determined by
\begin{equation}
\label{eq:annulus_normalization}
\int_{r_1}^{r_2}
\left(
\frac{\lambda}
{\omega_{n-1}q_0(r)r^{n-1}}
\right)^{\!1/(q_0(r)-1)}
\,dr
=
1.
\end{equation}
Moreover,
\begin{equation}
\label{eq:annulus_modulus_formula}
M_{p(\cdot)}(\Gamma)
=
\omega_{n-1}
\int_{r_1}^{r_2}
\rho_*(r)^{q_0(r)}r^{n-1}\,dr.
\end{equation}
\end{theorem}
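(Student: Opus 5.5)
The plan is to repeat the arguments of Theorems~\ref{thm:existence} and \ref{thm:EL} and Corollary~\ref{cor:explicit} with the positive bounded weight $w(r):=\omega_{n-1}r^{n-1}$ inserted. On $[r_1,r_2]$ we have $\omega_{n-1}r_1^{n-1}\le w(r)\le\omega_{n-1}r_2^{n-1}$. Theorem~\ref{thm:1d_modulus} already gives the existence and uniqueness of the minimizer $\rho_*\in\mathcal A_r$ and the identity $M_{p(\cdot)}(\Gamma)=\inf_{\mathcal A_r}J_r=J_r(\rho_*)$. This identity is exactly \eqref{eq:annulus_modulus_formula}. What remains is positivity, the Euler--Lagrange equation \eqref{eq:annulus_EL}, and the unique solvability of \eqref{eq:annulus_normalization}.

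\emph{Positivity.} Suppose $E=\{\rho_*=0\}$ has positive measure. Choose $F\subset\{\rho_*\ge\delta\}$ with $|F|>0$ and perturb by $\rho_\varepsilon=\rho_*+\varepsilon(\chi_E/|E|-\chi_F/|F|)$.
\begin{itemize}
\item On $E$ the energy gain is at most $\omega_{n-1}r_2^{n-1}|E|^{1-q_0^-}\varepsilon^{q_0^-}$.
\item On $F$ the mean value theorem gives a loss of at least $\omega_{n-1}r_1^{n-1}q_0^-\kappa\,\varepsilon$, with $\kappa$ as in Theorem~\ref{thm:existence}.
\end{itemize}
Since $q_0^->1$, the gain is of higher order, so $J_r(\rho_\varepsilon)<J_r(\rho_*)$ for small $\varepsilon$. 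This contradicts minimality.

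\emph{Euler--Lagrange equation.} Work on the sets $E_k=\{1/k\le\rho_*\le k\}$ and use two-sided perturbations $h$ as in \eqref{fun-h}. The function $(s,q,r)\mapsto w(r)\,q\,s^{q-1}$ is continuous and hence bounded on $[1/(2k),2k]\times[q_0^-,q_0^+]\times[r_1,r_2]$. Dominated convergence therefore lets us differentiate under the integral, and stationarity gives
\[
\int g\,h=0,\qquad g(r)=w(r)\,q_0(r)\,\rho_*(r)^{q_0(r)-1}.
\]
Set $S=\{g<a\}$ and $T=\{g>b\}$ for $a<b$. The equal-averages argument then shows that $g$ is a.e.\ constant on each $E_k$. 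These constants agree along the increasing sequence of sets, and $\bigcup_k E_k$ has full measure. This gives a single constant $\lambda$, and positivity of $g$ gives $\lambda>0$. Solving \eqref{eq:annulus_EL} pointwise yields \eqref{eq:annulus_extr}, and the constraint $\int\rho_*=1$ is \eqref{eq:annulus_normalization}.

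\emph{Uniqueness of $\lambda$.} Consider
\[
F(\lambda)=\int_{r_1}^{r_2}\bigl(\lambda/(w(r)q_0(r))\bigr)^{1/(q_0(r)-1)}\,dr .
\]
The base $\lambda/(wq_0)$ lies between $\lambda/(\omega_{n-1}r_2^{n-1}q_0^+)$ and $\lambda/(\omega_{n-1}r_1^{n-1}q_0^-)$, and the exponent lies in $[1/(q_0^+-1),1/(q_0^--1)]$. The integrand is therefore locally uniformly bounded in $\lambda$. As in Corollary~\ref{cor:explicit}, dominated and monotone convergence show that $F$ is continuous, strictly increasing, and runs from $0$ to $\infty$. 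Hence exactly one $\lambda$ satisfies $F(\lambda)=1$.

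Alternatively, once \eqref{eq:annulus_EL} holds, the convexity inequality from the converse part of Theorem~\ref{thm:EL} shows that any $\varphi\in\mathcal A_r$ solving \eqref{eq:annulus_EL} is the minimizer. This gives the uniqueness of $\lambda$ as well.

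The main obstacle is the Euler--Lagrange step. It needs control of the difference quotients uniformly in $r$, which is handled by truncating to $E_k$ and using the two-sided bounds on $r^{n-1}$.
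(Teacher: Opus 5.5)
Your proposal is correct and follows essentially the same route as the paper. Both arguments use four steps:
\begin{itemize}
\item positivity via the mass-transfer perturbation, with the weight $\omega_{n-1}r^{n-1}$ bounded above and below;
\item the Euler--Lagrange equation via two-sided perturbations on truncation sets, followed by the equal-averages argument;
\item uniqueness of $\lambda$ from the continuity and strict monotonicity of the normalization function;
\item the modulus formula taken from Theorem~\ref{thm:1d_modulus}.
\end{itemize}
Your extra uniqueness argument for $\lambda$, via the convexity inequality and uniqueness of the minimizer, is also valid but not needed.
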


\begin{proof}
Let $\rho_*$ be the unique minimizer of $J_r$ in $\mathcal A_r$, whose
existence and uniqueness were established in
Theorem~\ref{thm:1d_modulus}. We first note that
\begin{equation}
\label{eq:annulus_positive}
\rho_*(r)>0
\end{equation}
for almost every $r\in(r_1,r_2)$.
Indeed, the positivity argument used for the unweighted one-dimensional
problem applies without change to the present functional, because
\begin{equation*}
0<\omega_{n-1}r_1^{n-1}
\le
\omega_{n-1}r^{n-1}
\le
\omega_{n-1}r_2^{n-1}
<\infty
\end{equation*}
on $(r_1,r_2)$. If $\rho_*$ vanished on a set of positive measure, one
could transfer a sufficiently small amount of mass from a subset on
which $\rho_*$ is bounded below to that zero set. The contribution of
the zero set would be of order $\varepsilon^{q_0^-}$, whereas the
decrease on the positive set would be of order $\varepsilon$. Since
$q_0^->1$, this would strictly decrease $J_r$, contradicting minimality.

We next establish the Euler--Lagrange equation. Define
\begin{equation*}
g(r)
:=
\omega_{n-1}q_0(r)r^{n-1}
\rho_*(r)^{q_0(r)-1}.
\end{equation*}
Fix $0<\delta<N<\infty$ and consider the measurable level set
\begin{equation*}
E_{\delta,N}
:=
\{r\in(r_1,r_2):\delta\le\rho_*(r)\le N\}.
\end{equation*}
By \eqref{eq:annulus_positive}, these sets exhaust $(r_1,r_2)$ up to a
null set as $\delta\downarrow0$ and $N\uparrow\infty$.

Let $S,T\subset E_{\delta,N}$ be disjoint measurable sets of positive
measure. Define $h$ as in \eqref{fun-h}.
Then $\int_{r_1}^{r_2}h(r)\,dr=0.$
For sufficiently small $|\varepsilon|$, the function
$\rho_\varepsilon=\rho_*+\varepsilon h$
is nonnegative and satisfies
$\int_{r_1}^{r_2}\rho_\varepsilon(r)\,dr=1.$
Hence, $\rho_\varepsilon\in\mathcal A_r$ for both signs of sufficiently
small $\varepsilon$.

On $S\cup T$, the functions $\rho_\varepsilon$ remain in a fixed
compact subinterval of $(0,\infty)$ for sufficiently small
$|\varepsilon|$. Since $q_0$ is uniformly bounded between $q_0^-$ and
$q_0^+$, the difference quotients
\begin{equation*}
\frac{
1
}{\varepsilon}\left(\rho_\varepsilon(r)^{q_0(r)}
-
\rho_*(r)^{q_0(r)}\right)
\end{equation*}
are dominated by an integrable function independent of $\varepsilon$.
Therefore, differentiation under the integral sign is justified, and
minimality of $\rho_*$ for both positive and negative $\varepsilon$
gives
\begin{equation}
\label{eq:annulus_first_variation}
0
=
\left.
\frac{d}{d\varepsilon}
J_r(\rho_\varepsilon)
\right|_{\varepsilon=0}
=
\int_{r_1}^{r_2}
g(r)h(r)\,dr.
\end{equation}
Consequently,
\begin{equation*}
\frac{1}{|S|}\int_S g(r)\,dr
=
\frac{1}{|T|}\int_T g(r)\,dr.
\end{equation*}
Since this holds for every pair of disjoint positive-measure subsets
$S,T\subset E_{\delta,N}$, it follows that $g$ is constant almost
everywhere on $E_{\delta,N}$. Thus, there exists a constant
$\lambda_{\delta,N}$ such that $g(r)=\lambda_{\delta,N}$
for almost every $r\in E_{\delta,N}$.

If two such level sets have an intersection of positive measure, their
corresponding constants must coincide. Since the sets
$E_{\delta,N}$ exhaust $(r_1,r_2)$ up to a null set, there is therefore
a single constant $\lambda\in\mathbb R$ such that $g(r)=\lambda$ for almost every $r\in(r_1,r_2)$.
This proves \eqref{eq:annulus_EL}.

In view of
\eqref{eq:annulus_positive}, together with
$q_0(r)>1$ and $r>0$, every factor on the left-hand side of
\eqref{eq:annulus_EL} is positive almost everywhere. Hence
$\lambda>0$.
Solving \eqref{eq:annulus_EL} for $\rho_*(r)$ gives
\eqref{eq:annulus_extr}. Since $\rho_*\in\mathcal A_r$, its normalization
immediately yields \eqref{eq:annulus_normalization}.

It remains to prove that the normalization equation determines $\lambda$
uniquely. Define
\begin{equation}
\label{eq:annulus_G}
G(\lambda)
:=
\int_{r_1}^{r_2}
\left(
\frac{\lambda}
{\omega_{n-1}q_0(r)r^{n-1}}
\right)^{\!1/(q_0(r)-1)}
\,dr,
\qquad \lambda>0.
\end{equation}
For each fixed $r$, the integrand is continuous and strictly increasing
as a function of $\lambda>0$. Hence $G$ is strictly increasing.

We next verify continuity. Put
\begin{equation*}
d_-
:=
\omega_{n-1}q_0^-r_1^{n-1},
\qquad
d_+
:=
\omega_{n-1}q_0^+r_2^{n-1}.
\end{equation*}
More directly, since
\begin{equation*}
\omega_{n-1}q_0^-r_1^{n-1}
\le
\omega_{n-1}q_0(r)r^{n-1}
\le
\omega_{n-1}q_0^+r_2^{n-1},
\end{equation*}
the quotient appearing in \eqref{eq:annulus_G} is uniformly bounded
above on every compact interval $[\lambda_1,\lambda_2]\subset(0,\infty)$.
Indeed, if
\begin{equation*}
a_1=\frac{\lambda_1}
{\omega_{n-1}q_0^+r_2^{n-1}},
\qquad
a_2=\frac{\lambda_2}
{\omega_{n-1}q_0^-r_1^{n-1}},
\end{equation*}
then the quotient belongs to $[a_1,a_2]$, while
\begin{equation*}
\frac{1}{q_0^+-1}
\le
\frac{1}{q_0(r)-1}
\le
\frac{1}{q_0^--1}.
\end{equation*}
Thus, the integrand is bounded by a finite constant depending only on
$\lambda_2$, $q_0^\pm$, $r_1$, $r_2$, and $\omega_{n-1}$. Since
$(r_1,r_2)$ has finite measure, dominated convergence proves that
$G$ is continuous on $(0,\infty)$.

As $\lambda\downarrow0$, the integrand in \eqref{eq:annulus_G}
converges pointwise to zero. For $0<\lambda\le1$, it is bounded above by
the integrand corresponding to $\lambda=1$, which is integrable by the
same uniform bounds. Hence
$\lim_{\lambda\downarrow0}G(\lambda)=0.$
On the other hand, as $\lambda\uparrow\infty$, the integrand converges
pointwise to $+\infty$. By the monotone convergence theorem,
$\lim_{\lambda\uparrow\infty}G(\lambda)=+\infty.$
Therefore, the intermediate value theorem and strict monotonicity imply
that there exists a unique $\lambda>0$ satisfying $G(\lambda)=1$.
This proves the uniqueness asserted in \eqref{eq:annulus_normalization}.

Finally, by Theorem~\ref{thm:1d_modulus},
\begin{equation*}
M_{p(\cdot)}(\Gamma)
=
\inf_{\varphi\in\mathcal A_r}J_r(\varphi)
=
J_r(\rho_*),
\end{equation*}
and therefore
\begin{equation*}
M_{p(\cdot)}(\Gamma)
=
\omega_{n-1}
\int_{r_1}^{r_2}
\rho_*(r)^{q_0(r)}r^{n-1}\,dr.
\end{equation*}
This proves \eqref{eq:annulus_modulus_formula} and we are done.
\end{proof}

The explicit formula above immediately recovers the classical extremal
density and modulus in the constant-exponent case.
\begin{corollary}
\label{cor:constant_exp}
Suppose that $q_0(r)\equiv p,$ where $1<p<\infty$.
Then the unique extremal density is
\begin{equation}
\label{eq:constant_extremal_density}
\rho_*(r)
=
\frac{
r^{-\frac{n-1}{p-1}}
}{
\displaystyle
\int_{r_1}^{r_2}
s^{-\frac{n-1}{p-1}}\,ds
},
\end{equation}
and the modular modulus is
\begin{equation}
\label{eq:constant_modulus}
M_p(\Gamma)
=
\omega_{n-1}
\left(
\int_{r_1}^{r_2}
r^{-\frac{n-1}{p-1}}\,dr
\right)^{1-p}.
\end{equation}
\end{corollary}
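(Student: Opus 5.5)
We specialize Theorem~\ref{thm:annulus_formula} to $q_0\equiv p$ and then compute everything explicitly. With $q_0(r)=p$, the Euler--Lagrange identity \eqref{eq:annulus_EL} becomes
\begin{equation*}
\omega_{n-1}\,p\,r^{n-1}\rho_*(r)^{p-1}=\lambda
\end{equation*}
for almost every $r$. Solving gives
\begin{equation*}
\rho_*(r)=\left(\frac{\lambda}{\omega_{n-1}p}\right)^{1/(p-1)} r^{-\frac{n-1}{p-1}},
\end{equation*}
that is, $\rho_*(r)=c\,r^{-\frac{n-1}{p-1}}$ with a constant $c>0$ that depends only on $\lambda$. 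The normalization \eqref{eq:annulus_normalization} is then the single equation $c\int_{r_1}^{r_2}s^{-\frac{n-1}{p-1}}\,ds=1$. This determines $c$ and yields \eqref{eq:constant_extremal_density}.

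Uniqueness needs no separate argument: Theorem~\ref{thm:1d_modulus} already provides a unique minimizer, and the converse part of the variational characterization applies equally well. Concretely, the convexity argument in the proof of Theorem~\ref{thm:EL}, carried out with the weight $\omega_{n-1}r^{n-1}$, shows that any profile in $\mathcal A_r$ satisfying \eqref{eq:annulus_EL} is the minimizer. One can also check directly that the displayed profile satisfies \eqref{eq:annulus_EL} with $\lambda=\omega_{n-1}p\,c^{p-1}$.

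For the modulus, write $I:=\int_{r_1}^{r_2}r^{-\frac{n-1}{p-1}}\,dr$, so that $c=I^{-1}$, and substitute into \eqref{eq:annulus_modulus_formula}. This gives
\begin{equation*}
M_p(\Gamma)=\omega_{n-1}c^{p}\int_{r_1}^{r_2} r^{-\frac{(n-1)p}{p-1}}\,r^{n-1}\,dr .
\end{equation*}
The key exponent identity is $-\frac{(n-1)p}{p-1}+(n-1)=-\frac{n-1}{p-1}$. With it, the remaining integral equals $I$, and therefore $M_p(\Gamma)=\omega_{n-1}I^{-p}I=\omega_{n-1}I^{1-p}$, which is \eqref{eq:constant_modulus}.

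There is no real obstacle here. The only points needing care are the exponent bookkeeping in the last step and the check that $I$ is finite and positive. Finiteness holds because $0<r_1<r_2<\infty$ and the integrand is continuous on $[r_1,r_2]$. In the case $p=n$ the exponent is $-1$, so $I=\log(r_2/r_1)$, and we recover the conformal formula of Example~\ref{ex:annulus} as a consistency check.
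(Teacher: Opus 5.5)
Your proposal is correct and matches the paper's proof: you specialize \eqref{eq:annulus_EL} to $q_0\equiv p$, fix the constant by the normalization, and substitute into \eqref{eq:annulus_modulus_formula} using $-\frac{(n-1)p}{p-1}+(n-1)=-\frac{n-1}{p-1}$. Your further remarks are also correct, namely uniqueness via Theorem~\ref{thm:1d_modulus} or the weighted convexity converse, finiteness and positivity of $I$, and the $p=n$ consistency check; the paper leaves these implicit.
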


\begin{proof}
When $q_0(r)\equiv p$, equation \eqref{eq:annulus_extr} becomes
\begin{equation*}
\rho_*(r)
=
\left(
\frac{\lambda}
{\omega_{n-1}p}
\right)^{1/(p-1)}
r^{-\frac{n-1}{p-1}}.
\end{equation*}
Thus
\begin{equation*}
\rho_*(r)=C r^{-\frac{n-1}{p-1}}
\end{equation*}
for a positive constant $C$. The normalization
\begin{equation*}
\int_{r_1}^{r_2}\rho_*(r)\,dr=1
\end{equation*}
gives
\begin{equation*}
C
=
\left(
\int_{r_1}^{r_2}
s^{-\frac{n-1}{p-1}}\,ds
\right)^{-1},
\end{equation*}
which proves \eqref{eq:constant_extremal_density}.

Substituting this expression into
\begin{equation*}
M_p(\Gamma)
=
\omega_{n-1}
\int_{r_1}^{r_2}
\rho_*(r)^p r^{n-1}\,dr
\end{equation*}
gives
\begin{align*}
M_p(\Gamma)
=
\omega_{n-1}C^p
\int_{r_1}^{r_2}
r^{-\frac{p(n-1)}{p-1}}r^{n-1}\,dr
=
\omega_{n-1}C^p
\int_{r_1}^{r_2}
r^{-\frac{n-1}{p-1}}\,dr.
\end{align*}
If
\begin{equation*}
I=
\int_{r_1}^{r_2}
r^{-\frac{n-1}{p-1}}\,dr,
\end{equation*}
then $C=I^{-1}$, and hence
\begin{equation*}
M_p(\Gamma)
=
\omega_{n-1}I^{-p}I
=
\omega_{n-1}I^{1-p}.
\end{equation*}
This is precisely \eqref{eq:constant_modulus}.
\end{proof}

\subsection{Reduction for the cylindrical domain}
\label{sec:cylinder}
The following reduction requires only that the cross-sectional domain
have finite positive measure; in particular, no regularity of its
boundary is needed.
\begin{proposition}
\label{prop:cylinder}
Let $D\subset\mathbb R^{n-1}$ be a domain with
$0<A:=|D|<\infty$,
and let
\begin{equation*}
\mathcal C:=D\times(0,L),
\qquad L>0.
\end{equation*}
Suppose that the variable exponent depends only on the axial
coordinate, namely
$p(x',t)=\eta(t)$,
where $\eta:(0,L)\to(1,\infty)$ is measurable and satisfies
\begin{equation}
\label{eq:cylinder_exp_bounds}
1<\eta^-\le \eta(t)\le \eta^+<\infty
\end{equation}
for almost every $t\in(0,L)$.
Let $\Gamma$ be the family of all locally rectifiable curves
$\gamma:[0,T]\longrightarrow\overline{\mathcal C}$
whose initial and terminal points belong to
$D\times\{0\}$ and $D\times\{L\}$, respectively, and whose interior
lies in $\mathcal C$.
Then
\begin{equation}
\label{eq:cylinder_1d_reduction}
M_{p(\cdot)}(\Gamma)
=
\inf_{\varphi\in\mathcal A_c}
A\int_0^L \varphi(t)^{\eta(t)}\,dt,
\end{equation}
where
\begin{equation}
\label{eq:cylinder_admissible_class}
\mathcal A_c
=
\left\{
\varphi\in L^{\eta(\cdot)}(0,L):
\ \varphi\ge0\ \text{a.e. on }(0,L),
\ \int_0^L\varphi(t)\,dt=1
\right\}.
\end{equation}
\end{proposition}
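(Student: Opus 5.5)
We prove the two inequalities separately, writing
\begin{equation*}
m_c:=\inf_{\varphi\in\mathcal A_c}\int_0^L\varphi(t)^{\eta(t)}\,dt .
\end{equation*}

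\textbf{Upper bound.} We follow Example~\ref{ex:cylinder}. For $\varphi\in\mathcal A_c$, extended by zero outside $(0,L)$ and taken Borel after modification on a null set, put $\rho(x',t)=\varphi(t)$. Parametrize $\gamma\in\Gamma$ by arc length and apply Lemma~\ref{lem:projection}, with $(r_1,r_2)$ replaced by $(0,L)$, to the absolutely continuous coordinate $t(s)$. Since $|t'(s)|\le|\gamma'(s)|$, this gives $\int_\gamma\rho\,ds\ge\int_0^L\varphi\,dt=1$. The a.e.\ modification is harmless: if needed we replace $\varphi$ by a Borel function $\ge\varphi$ that is equal a.e., which leaves both the constraint and the energy unchanged. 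Fubini then gives the energy $A\int_0^L\varphi^{\eta}\,dt$. Taking the infimum yields $M_{p(\cdot)}(\Gamma)\le A\,m_c$.

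\textbf{Lower bound.} Take $\rho\in\mathcal F(\Gamma)$ with finite energy; otherwise there is nothing to prove. For every $x'\in D$ the vertical segment $\gamma_{x'}(t)=(x',t)$ lies in $\Gamma$, since its endpoints are in the bases and its interior is in $\mathcal C$. Hence
\begin{equation*}
c(x'):=\int_0^L\rho(x',t)\,dt\ge1 \qquad\text{for every } x'\in D.
\end{equation*}
By Tonelli, for a.e.\ $x'$ the fiber function $\rho_{x'}:=\rho(x',\cdot)$ is measurable and satisfies $\int_0^L\rho_{x'}^{\eta}\,dt<\infty$, so $\rho_{x'}\in L^{\eta(\cdot)}(0,L)$. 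Now normalize: $\psi_{x'}:=\rho_{x'}/c(x')$. If $c(x')=\infty$, then the energy on that fiber is still $\ge m_c$: truncate $\rho_{x'}$ to $\min(\rho_{x'},k)$ and rescale, which produces an element of $\mathcal A_c$ with energy no larger, so the fiber bound holds in all cases. Then $\psi_{x'}\in\mathcal A_c$, and since $c(x')\ge1$ and $\eta>1$ we have $\psi_{x'}^{\eta}\le\rho_{x'}^{\eta}$. Therefore
\begin{equation*}
\int_0^L\rho(x',t)^{\eta(t)}\,dt\;\ge\;\int_0^L\psi_{x'}(t)^{\eta(t)}\,dt\;\ge\; m_c
\qquad\text{for a.e. } x'\in D.
\end{equation*}
Integrating over $D$ with Tonelli gives $\int_{\mathcal C}\rho^{p}\ge A\,m_c$. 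The infimum over $\rho$ then gives $M_{p(\cdot)}(\Gamma)\ge A\,m_c$.

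\textbf{Main obstacle.} The delicate part is measure-theoretic bookkeeping, not the inequalities themselves. One must check that each vertical segment really belongs to $\Gamma$ for every $x'\in D$; this holds because $D$ is open. One must also check that the fiber functions are measurable and lie in $L^{\eta(\cdot)}(0,L)$ for a.e.\ $x'$, and handle fibers where $c(x')$ is infinite. Both follow from Tonelli and the finiteness of the total energy, so we expect no real difficulty. Finally, Theorem~\ref{thm:existence} shows that the infimum in \eqref{eq:cylinder_1d_reduction} is attained.
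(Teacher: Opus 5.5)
Your proof is correct. The upper bound matches the paper's: the axial density $\rho(x',t)=\varphi(t)$, Lemma~\ref{lem:projection} applied to the last coordinate, and Fubini.

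\textbf{Lower bound.} Here you take a genuinely different route.

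\emph{The paper's argument.} It averages over the cross-section, setting $\overline\rho(t)=A^{-1}\int_D\rho(x',t)\,dx'$. Integrating the vertical-segment constraints over $D$ gives $\int_0^L\overline\rho\,dt\ge1$. It then applies Jensen's inequality to $s\mapsto s^{\eta(t)}$ with respect to the probability measure $dx'/A$, which gives $A\int_0^L\overline\rho^{\,\eta}\,dt\le\int_{\mathcal C}\rho^{p}\,dx$. Finally it rescales $\overline\rho$ into $\mathcal A_c$.

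\emph{Your argument.} You treat each vertical fiber $\rho(x',\cdot)$ as a one-dimensional competitor in its own right and rescale it into $\mathcal A_c$. You then integrate the fiberwise bound $\int_0^L\rho(x',t)^{\eta(t)}\,dt\ge m_c:=\inf_{\mathcal A_c}\int_0^L\varphi^{\eta}\,dt$ over $D$.

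\emph{What each buys.} Your version uses no convexity at all in the lower bound; it only needs $c^{-\eta}\le1$ for $c\ge1$. It also yields the stronger statement that almost every fiber individually carries energy at least $m_c$. It would still give a fiberwise lower bound if the exponent depended on $x'$ as well, which is exactly where the Jensen step breaks down. It is in fact the ``fiberwise minimization'' the paper announces in its introduction and in Example~\ref{ex:cylinder}, though not the argument it actually writes out. The paper's averaging argument gives something different: from every admissible $\rho$ it produces a single axial admissible density of no larger energy. That is a genuine symmetrization statement, and it parallels the spherical averaging of Lemma~\ref{lem:radial_suffices} for the annulus, so both geometries are handled by the same mechanism.

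\textbf{Minor remarks.} Your separate treatment of fibers with $c(x')=\infty$ is unnecessary. Finite fiber energy already forces $\rho(x',\cdot)\in L^{\eta(\cdot)}(0,L)\subset L^1(0,L)$, and fibers of infinite energy satisfy the bound trivially. Also, sections of the Borel function $\rho$ are Borel for every $x'$, not merely almost every $x'$.
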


\begin{proof}
We first establish the inequality
\begin{equation}
\label{eq:cylinder_lower_bound}
\inf_{\varphi\in\mathcal A_c}
A\int_0^L\varphi(t)^{\eta(t)}\,dt
\le
M_{p(\cdot)}(\Gamma).
\end{equation}
Let $\rho\in\mathcal F(\Gamma)$ be an admissible density with finite
modular energy,
\begin{equation*}
\int_{\mathcal C}\rho(x)^{p(x)}\,dx<\infty.
\end{equation*}
Define its fiber average by
\begin{equation}
\label{eq:cylinder_fiber_average}
\overline\rho(t)
=
\frac{1}{A}\int_D\rho(x',t)\,dx',
\qquad t\in(0,L).
\end{equation}
For every $x'\in D$, the vertical segment
\begin{equation*}
\gamma_{x'}(t)=(x',t),
\qquad t\in[0,L],
\end{equation*}
belongs to $\Gamma$. Hence the admissibility of $\rho$ gives
\begin{equation*}
\int_0^L\rho(x',t)\,dt\ge1
\end{equation*}
for every $x'\in D$.
Integrating over $D$ with respect to the normalized measure
$dx'/A$ and applying Tonelli's theorem yields
\begin{align}
1
\le
\frac1A\int_D
\left(\int_0^L\rho(x',t)\,dt\right)dx'
=
\int_0^L
\left(\frac1A\int_D\rho(x',t)\,dx'\right)dt
=
\int_0^L\overline\rho(t)\,dt.
\label{eq:cylinder_average_constraint}
\end{align}
Set
\begin{equation*}
c:=\int_0^L\overline\rho(t)\,dt.
\end{equation*}
Thus, $c\ge1$.
Since $\eta(t)>1$ for almost every $t$, Jensen's inequality applied
to the convex function $s\mapsto s^{\eta(t)}$ with respect to the
probability measure $dx'/A$ gives
\begin{equation}
\label{eq:cylinder_jensen}
\overline\rho(t)^{\eta(t)}
\le
\frac1A\int_D
\rho(x',t)^{\eta(t)}\,dx'
\end{equation}
for almost every $t\in(0,L)$.
Consequently,
\begin{align}
A\int_0^L\overline\rho(t)^{\eta(t)}\,dt
\le
\int_0^L\int_D
\rho(x',t)^{\eta(t)}\,dx'\,dt
=
\int_{\mathcal C}\rho(x)^{p(x)}\,dx
<\infty.
\label{eq:cylinder_energy_average}
\end{align}
In particular, we have
\begin{equation*}
\overline\rho\in L^{\eta(\cdot)}(0,L).
\end{equation*}

If $c=1$, we put
$\psi=\overline\rho$.
If $c>1$, define
$\psi=\overline\rho/c$.
In either case, $\psi\in\mathcal A_c$.
Moreover, if $c>1$, then $c^{-\eta(t)}<1$
for almost every $t\in(0,L)$,
and therefore
\begin{equation}
\label{eq:cylinder_rescaling}
A\int_0^L\psi(t)^{\eta(t)}\,dt
\le
A\int_0^L\overline\rho(t)^{\eta(t)}\,dt.
\end{equation}
Combining \eqref{eq:cylinder_energy_average} and
\eqref{eq:cylinder_rescaling}, we obtain
\begin{equation*}
\inf_{\varphi\in\mathcal A_c}
A\int_0^L\varphi(t)^{\eta(t)}\,dt
\le
\int_{\mathcal C}\rho(x)^{p(x)}\,dx.
\end{equation*}
Taking the infimum over all admissible densities of finite modular
energy gives \eqref{eq:cylinder_lower_bound}.

Next we prove the reverse inequality. Let
$\varphi\in\mathcal A_c$ and define the axial density
\begin{equation}
\label{eq:cylinder_axial_density}
\rho(x',t)=\varphi(t).
\end{equation}
Let $\gamma:[0,T]\longrightarrow\overline{\mathcal C}$
be any curve in $\Gamma$, parametrized by arc length, and write
\begin{equation*}
\gamma(s)=(x'(s),t(s)).
\end{equation*}
The coordinate projection onto the last component is $1$-Lipschitz;
hence $|t'(s)|\le|\gamma'(s)|$ for almost every $s\in(0,T)$.
Furthermore, $t(0)=0$ and $t(T)=L$.
Therefore, by the one-dimensional projection inequality
(Lemma~\ref{lem:projection}), we obtain
\begin{align}
\int_\gamma\rho\,ds
=
\int_0^T
\varphi(t(s))|\gamma'(s)|\,ds
\ge
\int_0^T
\varphi(t(s))|t'(s)|\,ds
\ge
\int_0^L\varphi(t)\,dt
=
1.
\label{eq:cylinder_axial_admissibility}
\end{align}
Thus, $\rho\in\mathcal F(\Gamma)$.

Since $p(x',t)=\eta(t)$ and $\rho(x',t)=\varphi(t)$,
Fubini's theorem gives
\begin{align}
\int_{\mathcal C}\rho(x)^{p(x)}\,dx
=
\int_0^L
\int_D
\varphi(t)^{\eta(t)}\,dx'\,dt
=
A\int_0^L
\varphi(t)^{\eta(t)}\,dt.
\label{eq:cylinder_axial_energy}
\end{align}
Consequently,
\begin{equation*}
M_{p(\cdot)}(\Gamma)
\le
A\int_0^L\varphi(t)^{\eta(t)}\,dt.
\end{equation*}
Taking the infimum over $\varphi\in\mathcal A_c$ yields
\begin{equation}
\label{eq:cylinder_upper_bound}
M_{p(\cdot)}(\Gamma)
\le
\inf_{\varphi\in\mathcal A_c}
A\int_0^L\varphi(t)^{\eta(t)}\,dt.
\end{equation}
Combining \eqref{eq:cylinder_lower_bound} and
\eqref{eq:cylinder_upper_bound} proves \eqref{eq:cylinder_1d_reduction}. The proof is now complete.
\end{proof}

\section{Test Densities and Upper Bounds}
\label{sec:bounds}

\subsection{Upper bound for the annulus}

The explicit extremal formula in Theorem~\ref{thm:annulus_formula}
also provides a natural benchmark for simpler test densities. In
particular, the classical logarithmic density is admissible for the
annular curve family and yields an explicit upper bound for the
variable exponent modulus. The following theorem also identifies the
case in which this classical density remains exactly extremal.

\begin{theorem}
\label{thm:annulus_bounds}
Under the assumptions of Theorem~\ref{thm:annulus_formula}, let $p(x)=q_0(|x|)$,
where
\begin{equation*}
1<q_0^-\le q_0(r)\le q_0^+<\infty
\end{equation*}
for almost every $r\in(r_1,r_2)$.
Define the logarithmic radial profile by
\begin{equation}
\label{eq:log_test_profile}
\rho_{\log}(r)
=
\frac{1}{r\log(r_2/r_1)},
\qquad r\in(r_1,r_2),
\end{equation}
and the corresponding radial density on $A(r_1,r_2)$ by
$x\longmapsto \rho_{\log}(|x|).$
Then this density belongs to $\mathcal F(\Gamma)$. Consequently,
\begin{equation}
\label{eq:annulus_log_upper_bound}
M_{p(\cdot)}(\Gamma)
\le
\omega_{n-1}
\int_{r_1}^{r_2}
\frac{r^{n-1-q_0(r)}}
{\bigl[\log(r_2/r_1)\bigr]^{q_0(r)}}
\,dr.
\end{equation}

Moreover, if $q_0(r)\equiv n,$
then $\rho_{\log}$ is the unique extremal radial profile and
\begin{equation}
\label{eq:annulus_log_equality}
M_{p(\cdot)}(\Gamma)
=
\omega_{n-1}
\int_{r_1}^{r_2}
\frac{dr}
{r\,[\log(r_2/r_1)]^n}
=
\omega_{n-1}
\biggl(\log\frac{r_2}{r_1}\biggr)^{1-n}.
\end{equation}
\end{theorem}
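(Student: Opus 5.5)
The plan has three parts: admissibility, the upper bound, and extremality when $q_0\equiv n$.

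\emph{Admissibility.} I will use the characterization \eqref{eq:radial_admissibility_characterization} from the proof of Theorem~\ref{thm:1d_modulus}. A radial density $x\mapsto\varphi(|x|)$ lies in $\mathcal F(\Gamma)$ exactly when $\int_{r_1}^{r_2}\varphi\,dr\ge1$. For $\varphi=\rho_{\log}$ a direct computation gives
\begin{equation*}
\int_{r_1}^{r_2}\frac{dr}{r\log(r_2/r_1)}=1,
\end{equation*}
so $\rho_{\log}(|\cdot|)\in\mathcal F(\Gamma)$. If one prefers not to cite that equivalence, the same conclusion follows from Lemma~\ref{lem:projection}, applied to $\mu=|\gamma|$ as in Lemma~\ref{lem:radial_suffices}. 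The profile $\rho_{\log}$ is bounded on $[r_1,r_2]$, so it lies in $L^{q_0(\cdot)}(r_1,r_2)$ and in fact belongs to $\mathcal A_r$.

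\emph{Upper bound.} By the definition of $M_{p(\cdot)}$, or equivalently by \eqref{eq:annulus_1d_reduction}, we have $M_{p(\cdot)}(\Gamma)\le J_r(\rho_{\log})$. Polar coordinates, via \eqref{eq:radial_energy_identity}, give
\begin{equation*}
J_r(\rho_{\log})=\omega_{n-1}\int_{r_1}^{r_2}r^{-q_0(r)}[\log(r_2/r_1)]^{-q_0(r)}r^{n-1}\,dr,
\end{equation*}
and this is \eqref{eq:annulus_log_upper_bound}.

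\emph{Extremality for $q_0\equiv n$.} I will use the converse direction of the Euler--Lagrange characterization. By Theorem~\ref{thm:annulus_formula}, the minimizer satisfies \eqref{eq:annulus_EL}. The converse is not stated explicitly there for the weighted problem, so I will reprove it as in Theorem~\ref{thm:EL}. Suppose $\varphi\in\mathcal A_r$ satisfies $\omega_{n-1}q_0(r)r^{n-1}\varphi(r)^{q_0(r)-1}=\lambda$ a.e. Take any $\psi\in\mathcal A_r$. Pointwise convexity of $s\mapsto s^{q_0(r)}$, multiplied by the positive weight $\omega_{n-1}r^{n-1}$ and integrated, gives
\begin{equation*}
J_r(\psi)-J_r(\varphi)\ge\lambda\int_{r_1}^{r_2}(\psi-\varphi)\,dr=0.
\end{equation*}
Hence $\varphi$ is the unique minimizer from Theorem~\ref{thm:1d_modulus}. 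For $q_0\equiv n$, the quantity
\begin{equation*}
\omega_{n-1}\,n\,r^{n-1}\rho_{\log}(r)^{n-1}=\omega_{n-1}\,n\,[\log(r_2/r_1)]^{1-n}
\end{equation*}
is constant and positive, so $\rho_{\log}$ is the extremal profile. Alternatively, Corollary~\ref{cor:constant_exp} with $p=n$ gives $\rho_*(r)=r^{-1}/\log(r_2/r_1)$ directly. Finally, substituting $q_0=n$ into the bound gives $\int_{r_1}^{r_2}dr/(r[\log(r_2/r_1)]^n)$, which evaluates to $\omega_{n-1}[\log(r_2/r_1)]^{1-n}$. Equality holds by \eqref{eq:annulus_modulus_formula}.

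All steps are routine. The only point that needs care is the uniqueness and extremality claim: one has to confirm that the convexity (sufficiency) argument carries over with the weight $r^{n-1}$, or else simply invoke Corollary~\ref{cor:constant_exp}, which I would do first.
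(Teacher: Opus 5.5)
Your proof is correct. Admissibility and the upper bound go as in the paper. The paper applies Lemma~\ref{lem:projection} to $\mu=|\gamma|$ directly rather than citing \eqref{eq:radial_admissibility_characterization}, but that characterization is proved by the same argument.

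The real difference is in the case $q_0\equiv n$. The paper uses the \emph{necessary} condition \eqref{eq:annulus_EL} from Theorem~\ref{thm:annulus_formula}. That condition forces $\rho_*(r)=Cr^{-1}$, and normalization then gives $C=1/\log(r_2/r_1)$. This is the same computation as your fallback via Corollary~\ref{cor:constant_exp}.

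Your main argument runs in the \emph{sufficient} direction instead. You check that $\rho_{\log}$ satisfies \eqref{eq:annulus_EL} with a constant $\lambda$, and the weighted convexity inequality then shows it minimizes $J_r$ over $\mathcal A_r$. Uniqueness comes from Theorem~\ref{thm:1d_modulus}, i.e.\ from strict convexity.

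This route is more self-contained. It needs only the reduction \eqref{eq:annulus_1d_reduction} and strict convexity. It avoids the first-variation and positivity arguments behind \eqref{eq:annulus_EL}, and the paper only sketches the positivity step for the weighted functional. The paper's route, by contrast, derives the form of the minimizer rather than verifying a guessed candidate.
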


\begin{proof}
We first verify admissibility. Let
$\gamma:[0,T]\longrightarrow\overline{A(r_1,r_2)}$
be a locally rectifiable curve in $\Gamma$, parametrized by arc length,
with its interior contained in $A(r_1,r_2)$ and with $|\gamma(0)|=r_1,$ and $|\gamma(T)|=r_2.$
Set $\mu(s)=|\gamma(s)|.$
Since the map $x\mapsto |x|$ is $1$-Lipschitz,
$\mu$ is absolutely continuous and $|\mu'(s)|\le |\gamma'(s)|$ for almost every $s\in(0,T)$.
Moreover, $\mu(0)=r_1,$ and $\mu(T)=r_2.$
Therefore,
\begin{align}
\int_\gamma \rho_{\log}\,ds
=
\frac{1}{\log(r_2/r_1)}
\int_0^T
\frac{|\gamma'(s)|}{\mu(s)}\,ds
\ge
\frac{1}{\log(r_2/r_1)}
\int_0^T
\frac{|\mu'(s)|}{\mu(s)}\,ds.
\label{eq:log_density_projection}
\end{align}
Applying Lemma~\ref{lem:projection} to the nonnegative Borel function $r\longmapsto\ 1/r$
gives
\begin{align}
\int_\gamma \rho_{\log}\,ds
\ge
\frac{1}{\log(r_2/r_1)}
\int_{r_1}^{r_2}\frac{dr}{r}
=1.
\label{eq:log_density_admissible}
\end{align}
Hence the radial density induced by $\rho_{\log}$ belongs to
$\mathcal F(\Gamma)$.

By the polar-coordinate formula and the relation
$p(x)=q_0(|x|)$, we obtain
\begin{align}
M_{p(\cdot)}(\Gamma)
\le
\omega_{n-1}
\int_{r_1}^{r_2}
\rho_{\log}(r)^{q_0(r)}r^{n-1}\,dr
=
\omega_{n-1}
\int_{r_1}^{r_2}
\frac{r^{n-1-q_0(r)}}
{\bigl[\log(r_2/r_1)\bigr]^{q_0(r)}}
\,dr.
\end{align}
This proves \eqref{eq:annulus_log_upper_bound}.

It remains to consider the case $q_0\equiv n$. By
Theorem~\ref{thm:annulus_formula}, the unique extremal profile
satisfies
\begin{equation*}
\rho_*(r)
=
\left(
\frac{\lambda}
{\omega_{n-1}n r^{n-1}}
\right)^{1/(n-1)}
=
C r^{-1},
\end{equation*}
where
\begin{equation*}
C=
\left(
\frac{\lambda}{\omega_{n-1}n}
\right)^{1/(n-1)}
>0.
\end{equation*}
The normalization condition gives
\begin{equation*}
1
=
\int_{r_1}^{r_2}\rho_*(r)\,dr
=
C\int_{r_1}^{r_2}\frac{dr}{r}
=
C\log\frac{r_2}{r_1}.
\end{equation*}
Hence
\begin{equation*}
C=\frac{1}{\log(r_2/r_1)},
\end{equation*}
and therefore
$\rho_*(r)=\rho_{\log}(r)$.
Since $\rho_*$ is the unique minimizer of the reduced functional,
the upper bound in \eqref{eq:annulus_log_upper_bound} is attained.
More explicitly,
\begin{align}
M_{p(\cdot)}(\Gamma)
=
\omega_{n-1}
\int_{r_1}^{r_2}
\left(
\frac{1}{r\log(r_2/r_1)}
\right)^n
r^{n-1}\,dr
=
\frac{\omega_{n-1}}
{\bigl[\log(r_2/r_1)\bigr]^n}
\int_{r_1}^{r_2}\frac{dr}{r}
=
\omega_{n-1}
\biggl(\log\frac{r_2}{r_1}\biggr)^{1-n}.
\end{align}
This proves \eqref{eq:annulus_log_equality}, and we are done.
\end{proof}

\subsection{Upper bound for the cylinder}

The one-dimensional reduction in Proposition~\ref{prop:cylinder}
provides a direct way to obtain explicit upper bounds by choosing
simple admissible axial profiles. The constant profile
$\varphi(t)=1/L$ is the natural analogue of the classical logarithmic
test density for the annulus. The following theorem gives the resulting
upper bounds and identifies the constant-exponent case in which this
test profile is exactly extremal.

\begin{theorem}
\label{thm:cylinder_bounds}
Under the assumptions of Proposition~\ref{prop:cylinder}, the modular
variable exponent modulus satisfies
\begin{equation}
\label{eq:cylinder_constant_upper_bound}
M_{p(\cdot)}(\Gamma)
\le
A\int_0^L L^{-\eta(t)}\,dt.
\end{equation}
Consequently,
\begin{equation}
\label{eq:cylinder_upper_bound_large_L}
M_{p(\cdot)}(\Gamma)
\le
A L^{1-\eta^-},
\qquad L\ge1,
\end{equation}
and
\begin{equation}
\label{eq:cylinder_upper_bound_small_L}
M_{p(\cdot)}(\Gamma)
\le
A L^{1-\eta^+},
\qquad 0<L\le1.
\end{equation}

If the exponent is constant,
$\eta(t)\equiv p_0\in(1,\infty),$
then the unique extremal profile in $\mathcal A_c$ is
\begin{equation}
\label{eq:cylinder_constant_extremal}
\varphi_*(t)=\frac1L,
\qquad 0<t<L.
\end{equation}
In particular,
\begin{equation}
\label{eq:cylinder_constant_modulus}
M_{p_0}(\Gamma)
=
A L^{1-p_0}.
\end{equation}
\end{theorem}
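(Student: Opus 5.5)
The argument rests on Proposition~\ref{prop:cylinder}, which gives
\[
M_{p(\cdot)}(\Gamma)=\inf_{\varphi\in\mathcal A_c}A\int_0^L\varphi(t)^{\eta(t)}\,dt .
\]
Any admissible profile therefore yields an upper bound. I will take the constant test profile $\varphi\equiv 1/L$. It is nonnegative and bounded, hence lies in $L^{\eta(\cdot)}(0,L)$, and it has integral $1$, so $\varphi\in\mathcal A_c$. Substituting it into the reduced functional gives \eqref{eq:cylinder_constant_upper_bound} at once.

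For the two consequences I will compare $L^{-\eta(t)}$ pointwise with a power having a fixed exponent. If $L\ge1$, the map $s\mapsto L^{-s}$ is nonincreasing, so $L^{-\eta(t)}\le L^{-\eta^-}$ for a.e.\ $t$, and integrating over $(0,L)$ gives $AL\cdot L^{-\eta^-}=AL^{1-\eta^-}$. If $0<L\le1$, the same map is nondecreasing, so $L^{-\eta(t)}\le L^{-\eta^+}$, which gives $AL^{1-\eta^+}$. This is only a sign check on $\log L$.

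For the constant-exponent case $\eta\equiv p_0$, I will use Theorem~\ref{thm:EL} and Corollary~\ref{cor:explicit} applied to the exponent $\eta$ on $(0,L)$. The reduced problem in Proposition~\ref{prop:cylinder} is exactly $A$ times the functional $\mathcal J$ of Definition~\ref{def:1d_problem}, so its unique minimizer is the $\varphi_*$ of Theorem~\ref{thm:existence}. With $p\equiv p_0$, equation \eqref{eq:explicit} makes $\varphi_*$ equal to the constant $(\lambda/p_0)^{1/(p_0-1)}$, and the normalization \eqref{eq:lambda} forces this constant to be $1/L$.

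Alternatively, I can invoke the converse part of Theorem~\ref{thm:EL}. The profile $1/L$ lies in $\mathcal A_c$ and satisfies $p_0(1/L)^{p_0-1}=\lambda$ with $\lambda=p_0L^{1-p_0}>0$, so it coincides with $\varphi_*$. Either route proves \eqref{eq:cylinder_constant_extremal}. The modulus is then $A\int_0^L L^{-p_0}\,dt=AL^{1-p_0}$, which is \eqref{eq:cylinder_constant_modulus}.

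There is no serious obstacle here. The only point needing care is to state explicitly that the infimum in Proposition~\ref{prop:cylinder} is $A\cdot m$, where $m$ is the quantity in \eqref{min-problem}, so that the uniqueness from Theorem~\ref{thm:existence} transfers to $\mathcal A_c$.
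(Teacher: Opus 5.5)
Your proposal is correct and follows essentially the same route as the paper. Both use the constant test profile $1/L$ in the reduction of Proposition~\ref{prop:cylinder}, a sign check on $\log L$ for the two consequences, and the Euler--Lagrange characterization together with the identification of the reduced functional as $A\,\mathcal J$ (so that uniqueness transfers from Theorem~\ref{thm:existence}) for the constant-exponent case.
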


\begin{proof}
Consider the constant profile
\begin{equation*}
\varphi_0(t)=\frac1L,
\qquad 0<t<L.
\end{equation*}
Clearly,
\begin{equation*}
\varphi_0\ge0
\qquad\text{and}\qquad
\int_0^L\varphi_0(t)\,dt=1,
\end{equation*}
so $\varphi_0\in\mathcal A_c$. Proposition~\ref{prop:cylinder}
therefore gives
\begin{align}
M_{p(\cdot)}(\Gamma)
\le
A\int_0^L
\left(\frac1L\right)^{\eta(t)}\,dt
=
A\int_0^L L^{-\eta(t)}\,dt,
\end{align}
which proves \eqref{eq:cylinder_constant_upper_bound}.

Suppose first that $L\ge1$. Since $\log L\ge0$ and $\eta(t)\ge\eta^-$,
we have
\begin{equation*}
-\eta(t)\log L
\le
-\eta^-\log L.
\end{equation*}
Hence $L^{-\eta(t)}
\le
L^{-\eta^-}$
for almost every $t\in(0,L)$.
It follows that
\begin{equation*}
A\int_0^L L^{-\eta(t)}\,dt
\le
A\int_0^L L^{-\eta^-}\,dt
=
A L^{1-\eta^-},
\end{equation*}
proving \eqref{eq:cylinder_upper_bound_large_L}.

Now suppose that $0<L\le1$. Since $\log L\le0$ and
$\eta(t)\le\eta^+$,
multiplication by the nonpositive number $-\log L$ gives
\begin{equation*}
-\eta(t)\log L
\le
-\eta^+\log L.
\end{equation*}
Thus, $L^{-\eta(t)}
\le
L^{-\eta^+}$ for almost every $t\in(0,L),$
and consequently
\begin{equation*}
A\int_0^L L^{-\eta(t)}\,dt
\le
A\int_0^L L^{-\eta^+}\,dt
=
A L^{1-\eta^+}.
\end{equation*}
This proves \eqref{eq:cylinder_upper_bound_small_L}.

Finally, suppose that
$\eta(t)\equiv p_0$.
The reduced functional is then
\begin{equation*}
J_c(\varphi)
=
A\int_0^L\varphi(t)^{p_0}\,dt,
\qquad
\varphi\in\mathcal A_c.
\end{equation*}
By the Euler--Lagrange characterization applied to the one-dimensional
problem, every minimizer satisfies $p_0 A\,\varphi_*(t)^{p_0-1}
=
\lambda$ for almost every $t\in(0,L)$
for some constant $\lambda>0$. Hence $\varphi_*$ is constant almost
everywhere. The normalization condition then forces $\varphi_*(t)=1/L$ for almost every $t\in(0,L).$

The uniqueness of the minimizer follows from Theorem~\ref{thm:existence}, since
$\mathcal A_c$ coincides with the admissible class $\mathcal A$ of
Definition~\ref{def:1d_problem} (with $p=\eta\equiv p_0$), and $J_c=A\,\mathcal J$
is a positive constant multiple of the functional $\mathcal J$ studied there;
scaling by $A>0$ does not change the location of the minimizer. Therefore, the corresponding density
\begin{equation*}
\rho_*(x',t)=\frac1L
\end{equation*}
is the unique extremal density, up to equality almost everywhere.
Finally,
\begin{align}
M_{p_0}(\Gamma)
=
A\int_0^L
\left(\frac1L\right)^{p_0}\,dt
=
A L^{1-p_0},
\end{align}
which proves \eqref{eq:cylinder_constant_modulus}.
\end{proof}

\section{Capacity--Modulus Comparison}
\label{sec:capacity}

In this section, we relate the variable exponent relative capacity of a
condenser to the corresponding variable exponent modulus of the family of
curves joining its plates.

Throughout this section, let $\Omega\subset\mathbb{R}^{n}$ be a bounded
domain, and assume that $p\in\mathcal P^{\log}(\Omega),$ where
\begin{equation*}
1<p^-\le p(x)\le p^+<\infty,
\end{equation*}
for almost every $x\in\Omega$.
Let $E,F\Subset\Omega$ be disjoint nonempty compact sets, and let
$\Gamma=\Gamma(E,F;\Omega)$
denote the family of all locally rectifiable curves in $\Omega$ joining
$E$ to $F$.

We use the standard variable exponent relative capacity
\begin{equation*}
\operatorname{Cap}_{p(\cdot)}(E,F;\Omega)
=
\inf_{u\in\mathcal A(E,F;\Omega)}
\int_\Omega |\nabla u(x)|^{p(x)}\,dx,
\end{equation*}
where $\mathcal A(E,F;\Omega)$ denotes the class of admissible
Sobolev functions, with the boundary conditions
\begin{equation*}
u\ge1\quad\text{on }E,
\qquad
u\le0\quad\text{on }F,
\end{equation*}
understood in terms of the appropriate quasicontinuous representatives.

The preceding variational formulation identifies the curve modulus
with the natural energy minimization problem, and the corresponding
connection with condenser capacity is given by the following result.

Note that without the log-H\"older continuity assumption, assuming only the density of smooth functions yields a two-sided estimate with a dimensional distortion factor $n^{p^+/2}$ rather than an exact equality (see \cite[Theorem~5.4]{Har07} for the case of a single compact set relative to the boundary).

\begin{theorem}
\label{thm:cap_mod}
Under the assumptions above,
\begin{equation}
\label{eq:cap_mod_equality}
\operatorname{Cap}_{p(\cdot)}(E,F;\Omega)
=
M_{p(\cdot)}(\Gamma).
\end{equation}
\end{theorem}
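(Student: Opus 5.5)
We prove the two inequalities separately, passing through the continuous capacity $\operatorname{Cap}^c_{p(\cdot)}(E,F;\Omega)$ introduced in Section~\ref{sec:prelim}.

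\emph{Step 1: $M_{p(\cdot)}(\Gamma)\le\operatorname{Cap}^c_{p(\cdot)}(E,F;\Omega)$.} Let $u\in\mathcal A_c(E,F;\Omega)$. Since the problem is a modular one (not homogeneous), we cannot rescale, so we first truncate: replacing $u$ by $\min\{\max\{u,0\},1\}$ keeps $u$ admissible and does not increase $\int|\nabla u|^{p(x)}$. Then $\rho:=|\nabla u|$ (a Borel representative) is a candidate density. For $\gamma\in\Gamma$ joining $E$ to $F$, $u$ is absolutely continuous along $p(\cdot)$-a.e.\ curve. This is the variable exponent Fuglede lemma: $W^{1,p(\cdot)}\subset W^{1,p^-}_{\rm loc}$, and the exceptional family has $p^-$-modulus zero, hence $p(\cdot)$-modulus zero by Hölder-type modular comparison on the bounded set $\Omega$. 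On such curves $\int_\gamma\rho\,ds\ge|u(\gamma(a))-u(\gamma(b))|\ge1$. The exceptional family $\Gamma_0$ has $M_{p(\cdot)}(\Gamma_0)=0$, so for every $\varepsilon>0$ there is $\rho_0$ with $\int\rho_0^{p}<\varepsilon$ and $\int_\gamma\rho_0=\infty$ on $\Gamma_0$. Then $\rho+\rho_0\in\mathcal F(\Gamma)$. Using $(a+b)^{p}\le(1+\delta)^{p^+}a^{p}+C_\delta b^{p}$ and letting $\varepsilon\to0$, then $\delta\to0$, gives the bound.

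\emph{Step 2: $\operatorname{Cap}^c_{p(\cdot)}\le M_{p(\cdot)}(\Gamma)$.} This is the classical construction. Take $\rho\in\mathcal F(\Gamma)$ with finite energy. By the Vitali--Carathéodory theorem (valid in $L^{p(\cdot)}$ since $p^+<\infty$ and the modular is continuous), replace $\rho$ by a lower semicontinuous $\rho'\ge\rho$ with $\int\rho'^{p}\le\int\rho^p+\varepsilon$, and add a small constant so that $\rho'\ge\varepsilon_1>0$. Define
\[
u(x)=\min\Bigl\{1,\inf_{\gamma}\int_\gamma\rho'\,ds\Bigr\},
\]
where the infimum runs over curves from $F$ to $x$ in $\Omega$. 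Then $u=0$ on $F$ and $u\ge1$ on $E$ by admissibility. Lower semicontinuity and $\rho'>0$ make $u$ locally Lipschitz-type (continuous) with $|\nabla u|\le\rho'$ a.e. The standard upper gradient argument applies, using $\rho'$ bounded below, and one works with a version that is bounded on compacts by first truncating $\rho'$ at level $k$ and using monotone convergence. Continuity up to $\overline\Omega$ is arranged by a cutoff, since $E,F\Subset\Omega$: one can replace $u$ by $\max\{u,\psi\}$-type modifications near $\partial\Omega$, or simply note that competitors need only be continuous on a neighbourhood of $E\cup F$. Hence $\int|\nabla u|^{p}\le\int\rho'^{p}\le\int\rho^p+\varepsilon$.

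\emph{Step 3: $\operatorname{Cap}_{p(\cdot)}=\operatorname{Cap}^c_{p(\cdot)}$.} One inequality was noted in Section~\ref{sec:prelim}. For the other, take $u\in\mathcal A(E,F;\Omega)$, truncate it to $[0,1]$, and use log-Hölder continuity. Smooth functions are then dense in $W^{1,p(\cdot)}$, and quasicontinuous representatives are approximated in Sobolev capacity. For $\varepsilon>0$, the sets $\{\widetilde u<1-\varepsilon\}\cap E$ and $\{\widetilde u>\varepsilon\}\cap F$ have capacity zero, so there exist open sets $U_\varepsilon\supset$ these sets of small capacity with capacity test functions $w_\varepsilon$ of small modular. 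Apply mollified approximations $v_j\to u$, and consider
\[
\frac{\min\{1,\max\{v_j+w_\varepsilon,\,0\}\}-\varepsilon}{1-2\varepsilon},
\]
suitably corrected near $F$ by subtracting $w_\varepsilon$. This produces continuous competitors whose energies converge to $\int|\nabla u|^p$ by continuity of the modular (again using $p^+<\infty$ and the factor $(1-2\varepsilon)^{-p^+}\to1$).

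\emph{Main obstacle.} The main obstacle is Step 3, namely preserving the inequalities $\ge1$ on $E$ and $\le0$ on $F$ under approximation. This is exactly where log-Hölder continuity (through density of smooth functions and the quasicontinuity theory of \cite{Diening2011}) enters. I would try first to reduce to the known statement that $\operatorname{Cap}$ defined with quasicontinuous representatives is outer, and equals the infimum over $u\ge1$ on a neighbourhood of $E$. Mollification then preserves the constraints on slightly smaller neighbourhoods.
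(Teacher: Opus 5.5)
\textbf{The main gap is in Step~2.} Lower semicontinuity of $\rho'$ together with $\rho'\ge\varepsilon_1$ does \emph{not} make $u=\min\{1,\inf_\gamma\int_\gamma\rho'\,ds\}$ continuous, let alone locally Lipschitz. Suppose $p\le n$ near a point $q$ (e.g.\ $p\equiv n$). Take $g_q(x)=|x-q|^{-1}\bigl(\log(1/|x-q|)\bigr)^{-1}$ on a small ball $B(q,r_0)\subset\Omega$, set to zero outside and $+\infty$ at $q$. It is lower semicontinuous and lies in $L^{p(\cdot)}$. Yet every rectifiable curve ending at $q$ has $\int_\gamma g_q\,ds\ge\int_0^{r_0}\frac{ds}{s\log(1/s)}=\infty$. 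Now add $\sum_k a_kg_{q_k}$, with small $a_k>0$ and $(q_k)$ dense in $\Omega\setminus F$, to an admissible density; this costs arbitrarily little energy. Afterwards every lower semicontinuous majorant $\rho'$ yields a $u$ equal to $1$ on a dense subset of $\Omega\setminus F$ and equal to $0$ on $F$, so $u$ is not continuous.

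Your fallback is to truncate to $\rho'_k=\min\{\rho',k\}$ and use monotone convergence. This does give locally Lipschitz $u_k$ with $|\nabla u_k|\le\rho'$. But monotone convergence along each fixed curve only gives $\lim_k\inf_\gamma\int_\gamma\rho'_k\,ds\le\inf_\gamma\int_\gamma\rho'\,ds$, which is the wrong inequality. What you need is $\lim_k\inf_\gamma\int_\gamma\rho'_k\,ds\ge1$ over all curves in $\Omega$ from $F$ to $E$, i.e.\ an interchange of limit and infimum over an infinite family. That interchange is the real content of the classical constant-exponent argument:
take near-minimizing curves $\gamma_k$ and bound their lengths using $\rho'_k\ge\varepsilon_1$;
extract a uniformly convergent subsequence by Arzel\`a--Ascoli;
use that $\gamma\mapsto\int_\gamma\rho'_j\,ds$ is lower semicontinuous under uniform convergence because $\rho'_j$ is (this, not continuity of $u$, is what semicontinuity buys);
let $j\to\infty$ along the limit curve.
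You must also deal separately with limit curves that touch $\partial\Omega$, where admissibility of $\rho'$ gives no contradiction. None of this is in your proposal.

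\textbf{Three further points.} First, with $\operatorname{Cap}^c_{p(\cdot)}$ as defined in Section~\ref{sec:prelim} (competitors in $C(\overline\Omega)$), Steps~2 and~3 are false in general, so continuity up to $\partial\Omega$ cannot be ``arranged by a cutoff''. Take $p\equiv n=2$, $\Omega=B(0,10)\setminus([-1,1]\times\{0\})$, and $E,F$ the closed disks of radius $\delta/2$ centred at $(0,\pm\delta)$. A function in $C(\overline\Omega)\cap W^{1,2}(\Omega)$ is ACL across the slit, hence lies in $W^{1,2}(B(0,10))$. So $\operatorname{Cap}^c_2(E,F;\Omega)$ is bounded below by the modulus of the vertical segments from $E$ to $F$, independently of $\delta$. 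On the other hand, every curve of $\Gamma$ must reach $|x|=1$, so $M_2(\Gamma)\le2\pi/\log\bigl(2/(3\delta)\bigr)\to0$. You must therefore commit to your second option (continuity only in $\Omega$) and prove Step~3 for that version. As sketched, Step~3 is incomplete: the exceptional open set must also absorb the points where the mollifications fail to converge uniformly, and the corrections near $E$ and near $F$ must be separate continuous functions.

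Second, in Step~1 the Fuglede justification is reversed. On a bounded set, $M_{p(\cdot)}(\Gamma_0)=0$ implies $M_{p^-}(\Gamma_0)=0$, not conversely: curves through a point have zero $p^-$-modulus when $p^-\le n$, but positive $p(\cdot)$-modulus if $p>n$ near that point. You must cite the variable-exponent Fuglede lemma of Harjulehto--H\"ast\"o--Martio directly; with that, Step~1 is sound.

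Third, the paper avoids all of this by never introducing a continuous capacity. It proves $\operatorname{Cap}_{p(\cdot)}\le M_{p(\cdot)}$ by using the $\rho$-distance itself as a Sobolev competitor in the quasicontinuous formulation, so no continuity is needed. It proves $M_{p(\cdot)}\le\operatorname{Cap}_{p(\cdot)}$ by applying Fuglede to the quasicontinuous representative and discarding the capacity-zero sets $E_0\subset E$, $F_0\subset F$, since curves meeting a set of capacity zero form a family of modulus zero. Your Step~1 is the continuous-competitor version of that second half.
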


\begin{proof}
We prove the two inequalities separately. We first show that
\begin{equation*}
\operatorname{Cap}_{p(\cdot)}(E,F;\Omega) \le M_{p(\cdot)}(\Gamma).
\end{equation*}
Let $\rho \in \mathcal{F}(\Gamma)$ be an admissible density with $\int_\Omega \rho(x)^{p(x)}\,dx < \infty$. Since $p^- > 1$ and $\Omega$ is bounded, $\rho \in L^{p(\cdot)}(\Omega) \subset L^1(\Omega)$.

Define the $\rho$-length distance from a point $x \in \Omega$ to $F$ by
\begin{equation*}
d_\rho(x,F) := \inf_{\gamma} \int_\gamma \rho\,ds,
\end{equation*}
where the infimum is taken over all locally rectifiable curves $\gamma \subset \Omega$ joining $x$ to a point of $F$. Set $u(x) := \min\{1, d_\rho(x,F)\}$.

By standard upper gradient construction, $\rho$ is an upper gradient of $u$ with $0 \le u \le 1$, $u=0$ on $F$, and $u=1$ on $E$. Since $u$ is bounded and $\Omega$ has finite measure, $u \in L^{p(\cdot)}(\Omega)$. Because its upper gradient $\rho$ is also in $L^{p(\cdot)}(\Omega)$, it follows that $u \in W^{1,p(\cdot)}(\Omega)$ with $|\nabla u(x)| \le \rho(x)$ for almost every $x \in \Omega$. Understanding the boundary values via its quasicontinuous representative, $u$ is a valid competitor for the relative capacity. Thus,
\begin{equation*}
\operatorname{Cap}_{p(\cdot)}(E,F;\Omega) \le \int_\Omega |\nabla u(x)|^{p(x)}\,dx \le \int_\Omega \rho(x)^{p(x)}\,dx.
\end{equation*}
Taking the infimum over all $\rho \in \mathcal{F}(\Gamma)$ yields $\operatorname{Cap}_{p(\cdot)}(E,F;\Omega) \le M_{p(\cdot)}(\Gamma)$.

To prove the reverse inequality, let $u \in \mathcal{A}(E,F;\Omega)$ be an admissible Sobolev function. By truncation, we assume $0 \le u \le 1$ in $\Omega$. For its quasicontinuous representative, there exist exceptional sets $E_0 \subset E$ and $F_0 \subset F$ of $p(\cdot)$-relative capacity zero such that $u=1$ on $E \setminus E_0$ and $u=0$ on $F \setminus F_0$.

Let $\Gamma_{pts}$ be the family of all rectifiable curves in $\Omega$ that intersect either $E_0$ or $F_0$. Since $\operatorname{Cap}_{p(\cdot)}(E_0 \cup F_0, \Omega) = 0$, the corresponding curve modulus satisfies $M_{p(\cdot)}(\Gamma_{pts}) = 0$. By the variable-exponent Fuglede-type theorem (see \cite{HHM}), the inequality
\begin{equation}\label{ineq: ACC}
|u(\gamma(0))-u(\gamma(T))| \le \int_\gamma |\nabla u|\,ds
\end{equation}
holds for every curve outside a family $\Gamma _{AC}$ of curves along which $u$ fails to be absolutely continuous, with $M_{p(\cdot)}(\Gamma_{AC}) = 0$.

Define the total exceptional curve family $\Gamma_0 = \Gamma_{pts} \cup \Gamma_{AC}$. By subadditivity, $M_{p(\cdot)}(\Gamma_0) = 0$. For any curve $\gamma \in \Gamma \setminus \Gamma_0$, its endpoints satisfy $\gamma(0) \in E \setminus E_0$ and $\gamma(T) \in F \setminus F_0$. Therefore,
\begin{equation*}
1 = |1 - 0| = |u(\gamma(0)) - u(\gamma(T))| \le \int_\gamma |\nabla u|\,ds.
\end{equation*}
Fix $\varepsilon > 0$ and choose an admissible density $\rho_\varepsilon \in \mathcal{F}(\Gamma_0)$ such that $\int_\Omega \rho_\varepsilon(x)^{p(x)}\,dx < \varepsilon$. It follows that $|\nabla u| + \rho_\varepsilon \in \mathcal{F}(\Gamma)$ because $\int_\gamma |\nabla u|\,ds \ge 1$ for $\gamma \notin \Gamma_0$, and $\int_\gamma \rho_\varepsilon\,ds \ge 1$ for $\gamma \in \Gamma_0$. Hence,
\begin{equation*}
M_{p(\cdot)}(\Gamma) \le \int_\Omega (|\nabla u(x)| + \rho_\varepsilon(x))^{p(x)}\,dx.
\end{equation*}
Letting $\varepsilon \downarrow 0$, the dominated convergence theorem implies the right-hand side converges to $\int_\Omega |\nabla u(x)|^{p(x)}\,dx$ since $\rho_\varepsilon \to 0$ in $L^{p(\cdot)}(\Omega)$. Thus, $M_{p(\cdot)}(\Gamma) \le \int_\Omega |\nabla u(x)|^{p(x)}\,dx$. Taking the infimum over all $u \in \mathcal{A}(E,F;\Omega)$ yields $M_{p(\cdot)}(\Gamma) \le \operatorname{Cap}_{p(\cdot)}(E,F;\Omega)$, completing the proof.
\end{proof}

\section{Remark on Quasiconformal Distortion}

The classical $n$-modulus is quasi-invariant under $K$-quasiconformal mappings, with the distortion being entirely controlled by $K$. However, for a variable exponent, the situation is fundamentally different when $p(\cdot)\not\equiv n$. The change-of-variables formula introduces an additional Jacobian factor, and even conformal similarities demonstrate that a comparison with a constant depending solely on $n$, $K$, and the bounds of the exponent cannot hold in general.

Let $f:\Omega\to\Omega'$ be a $K$-quasiconformal mapping between domains in $\mathbb R^n$, and define the pulled-back exponent by
\begin{equation*}
\widetilde p(y)=p(f^{-1}(y)), \qquad y\in\Omega'.
\end{equation*}
If $\rho$ is an admissible density for a curve family $\Gamma$ in $\Omega$, a natural valid admissible density $\widetilde\rho$ for the image family $f(\Gamma)$ is given by
\begin{equation*}
\widetilde\rho(y) = \rho(f^{-1}(y)) \|Df^{-1}(y)\|.
\end{equation*}
Consequently, the change-of-variables formula yields the exact pullback energy integral:
\begin{equation*}
\int_{\Omega'} \widetilde\rho(y)^{\widetilde p(y)}\,dy = \int_{\Omega} \rho(x)^{p(x)} \|Df^{-1}(f(x))\|^{p(x)} J_f(x)\,dx.
\end{equation*}
Utilizing the fundamental quasiconformal distortion inequality $\|Df^{-1}(f(x))\|^n J_f(x)\le K$ for almost every $x\in\Omega$, we can factor out the scaling terms:
\begin{equation*}
\|Df^{-1}(f(x))\|^{p(x)}J_f(x) = \bigl( \|Df^{-1}(f(x))\|^nJ_f(x) \bigr)^{p(x)/n} J_f(x)^{1-p(x)/n},
\end{equation*}
which directly produces the upper bound
\begin{equation*}
\|Df^{-1}(f(x))\|^{p(x)}J_f(x) \le K^{p(x)/n} J_f(x)^{1-p(x)/n}.
\end{equation*}
Thus, unless $p(x)=n$ almost everywhere, the change of variables leaves a nontrivial, uncompensated Jacobian weight.

This observation rigorously shows that a uniform comparison depending only on $n$, $K$, and the global bounds $p^-,p^+$ cannot hold without an additional normalization. Consider the conformal similarity scaling map:
\begin{equation*}
f(x)=a x, \qquad a>0.
\end{equation*}
This mapping is $1$-quasiconformal. However, for a constant exponent $p(x)\equiv q$, the standard modulus transforms via:
\begin{equation*}
M_q(f(\Gamma)) = a^{n-q} M_q(\Gamma).
\end{equation*}
Therefore, if $q\ne n$, the ratio $M_q(f(\Gamma))/M_q(\Gamma)$ can be made arbitrarily large or small by varying $a$. Consequently, no constant depending only on $n$, $K$, $q^-$, and $q^+$ can provide a two-sided modular comparison for arbitrary quasiconformal mappings.

The underlying difficulty in the variable exponent case is therefore not merely the higher integrability of the Jacobian. When $p(x)<n$, the factor $J_f(x)^{1-p(x)/n}$ involves a positive power of the Jacobian. In contrast, when $p(x)>n$, it involves a negative power, requiring explicit information on the compression of the mapping or, equivalently, quantitative estimates involving the inverse mapping. Standard higher integrability of $J_f$ alone is insufficient to yield the uniform modular bounds required for an arbitrary choice of admissible density.

\medskip
\noindent
\textbf{A natural open problem.}
The preceding obstruction suggests that a meaningful quasiconformal distortion theory for the variable exponent modulus must incorporate an appropriate normalization of the mapping or precise quantitative control of both its Jacobian and inverse Jacobian.

A natural problem is to determine sharp conditions, expressed in terms of $p\in\mathcal P^{\log}(\Omega)$ together with local higher-integrability bounds on the Jacobians ($J_f\in L^{1+\delta}_{\mathrm{loc}}(\Omega)$ and $J_{f^{-1}}\in L^{1+\delta}_{\mathrm{loc}}(\Omega')$ for some $\delta=\delta(n,K)>0$), under which the uniform two-sided inequality
\begin{equation}
\label{eq:variable_qc_comparison}
\frac{1}{C} M_{p(\cdot)}(\Gamma) \le M_{\widetilde p(\cdot)}(f(\Gamma)) \le C M_{p(\cdot)}(\Gamma)
\end{equation}
holds for all curve families $\Gamma$ in $\Omega$, where $\widetilde p=p\circ f^{-1}$. Specifically, it remains to identify the sharp dependence of the structural constant $C$ on the local oscillation of $p(\cdot)$, its log-H\"older regularity constants, and the explicit distortion parameters of $f$.


\section{Consequences}
\label{sec:applications}

\subsection{Monotonicity with respect to the outer radius}

The explicit one-dimensional formulation also yields a useful monotonicity property. In this statement, the inner radius is fixed and the radial exponent is regarded as a fixed profile on the corresponding range of radii.

The following corollary records the monotonicity of the modulus with respect to the outer radius and clarifies the corresponding thin- and thick-annulus asymptotics.

\begin{corollary}
\label{cor:monotonicity}
Fix $r_1>0$, and suppose that $q_0:[r_1,\infty)\longrightarrow(1,\infty)$ is measurable and satisfies
\begin{equation*}
1<q_0^-\le q_0(r)\le q_0^+<\infty
\end{equation*}
for almost every $r\ge r_1$. For $R>r_1$, let $A_R=A(r_1,R)$ and let $\Gamma_R$ denote the family of locally rectifiable curves in $A_R$ joining $S_{r_1}^{n-1}$ to $S_R^{n-1}$, with exponent $p(x)=q_0(|x|).$ Then the function
\begin{equation*}
R\longmapsto M_{p(\cdot)}(\Gamma_R)
\end{equation*}
is strictly decreasing on $(r_1,\infty)$. In particular,
\begin{equation*}
R_1<R_2 \quad\Longrightarrow\quad M_{p(\cdot)}(\Gamma_{R_2}) < M_{p(\cdot)}(\Gamma_{R_1}).
\end{equation*}
Moreover,
\begin{equation}
\label{eq:thin_annulus_limit}
M_{p(\cdot)}(\Gamma_R)\longrightarrow\infty \qquad\text{as }R\downarrow r_1.
\end{equation}
In general, no universal limit as $R\to\infty$ follows from the assumptions above.

If, in addition, $q_0(r)\ge n$ for almost every sufficiently large $r$, then
\begin{equation}
\label{eq:thick_annulus_limit}
M_{p(\cdot)}(\Gamma_R)\longrightarrow0 \qquad\text{as }R\to\infty.
\end{equation}
\end{corollary}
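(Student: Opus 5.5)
Everything will be worked out through the one-dimensional representation of Theorem~\ref{thm:1d_modulus}: for each $R>r_1$,
\begin{equation*}
M_{p(\cdot)}(\Gamma_R)=m(R):=\inf\Big\{\omega_{n-1}\int_{r_1}^{R}\varphi(r)^{q_0(r)}r^{n-1}\,dr:\ \varphi\ge0,\ \int_{r_1}^R\varphi\,dr=1\Big\},
\end{equation*}
with the unique minimizer $\rho_*^{(R)}$ given by Theorem~\ref{thm:annulus_formula}, which is positive a.e.

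\emph{Monotonicity.} Let $r_1<R_1<R_2$ and let $\varphi_1=\rho_*^{(R_1)}$. Extending $\varphi_1$ by zero to $(R_1,R_2)$ gives an element of $\mathcal A_r$ for $(r_1,R_2)$ with the same energy, so $m(R_2)\le m(R_1)$. This extension vanishes on the set $(R_1,R_2)$ of positive measure. By Theorem~\ref{thm:annulus_formula} the minimizer on $(r_1,R_2)$ is positive a.e., so the extension is not the minimizer. Uniqueness then gives the strict inequality $m(R_2)<J_r(\text{extension})=m(R_1)$.

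\emph{Thin annulus.} For $\varphi$ with $\int_{r_1}^R\varphi=1$ and $\delta=R-r_1\le 1$, I will use $r^{n-1}\ge r_1^{n-1}$ and split according to whether $\varphi\ge1$ or $\varphi<1$. Where $\varphi\ge1$ we have $\varphi^{q_0}\ge\varphi^{q_0^-}$. Where $\varphi<1$ the integral of $\varphi$ is at most $\delta$. Hence, once $\delta<1/2$, $\int_{\{\varphi\ge1\}}\varphi\ge 1/2$. Jensen's (or H\"older's) inequality on that set, whose measure is at most $\delta$, then gives
\begin{equation*}
\int\varphi^{q_0^-}\ge \delta^{1-q_0^-}2^{-q_0^-}.
\end{equation*}
Therefore $m(R)\ge \omega_{n-1}r_1^{n-1}2^{-q_0^-}\delta^{1-q_0^-}\to\infty$. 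This splitting step, which handles the variable exponent, is the only delicate point, and it is routine.

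\emph{Thick annulus.} Assume $q_0\ge n$ a.e. on $[r_0,\infty)$, and fix $R_0=\max(r_0,r_1)$. For $R$ large I will use the test profile
\begin{equation*}
\varphi(r)=\frac{\chi_{(R_0,R)}(r)}{r\log(R/R_0)} .
\end{equation*}
It is admissible with integral $1$. Let $\ell=\log(R/R_0)\ge1$. Its energy is
\begin{equation*}
\omega_{n-1}\int_{R_0}^R r^{n-1-q_0(r)}\ell^{-q_0(r)}\,dr\le\omega_{n-1}\ell^{-n}\int_{R_0}^R r^{-1}\,dr\cdot R_0^{0},
\end{equation*}
using $r^{n-1-q_0}\le r^{-1}R_0^{\,n-q_0}\le r^{-1}\max(1,R_0^{\,n-q_0^+})$ and $\ell^{-q_0}\le\ell^{-n}$. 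Up to a constant, the bound is therefore $\ell^{1-n}$ (for $n\ge2$), which tends to $0$.

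For the remark that no universal limit exists in general, I will point to the constant case $q_0\equiv p<n$. There Corollary~\ref{cor:constant_exp} gives a positive limit $\omega_{n-1}\big(\int_{r_1}^\infty r^{-(n-1)/(p-1)}dr\big)^{1-p}$. Comparing this with the case $p\ge n$, where the limit is $0$, shows that the limit depends on the exponent.
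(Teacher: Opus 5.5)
Your proof is correct. The monotonicity step (extend by zero, then use a.e. positivity and uniqueness of the minimizer on $(r_1,R_2)$) follows the paper. So does the thick-annulus limit, via the profile $\chi_{(R_0,R)}(r)/(r\log(R/R_0))$, and so does the absence of a universal limit, via constant $p<n$ versus $p\ge n$.

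The genuine difference is the thin-annulus limit. The paper applies H\"older's inequality with the constant exponent $q_0^+$. It then appeals to an unspecified ``finite-measure comparison of modulars'' to obtain $\int_{r_1}^R\varphi^{q_0}\,dr\ge C\,\ell^{1-q_0^+}$, with $C$ allowed to depend on $R$. That gives no bound uniform in $R$. Moreover, with exponent $q_0^+$ no uniform constant can exist in general. If $q_0=q_0^-<q_0^+$ on a subset of $(r_1,R)$ of measure comparable to $\ell$, concentrating $\varphi$ there costs only about $\ell^{1-q_0^-}\ll\ell^{1-q_0^+}$.

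Your split into $\{\varphi\ge1\}$ and $\{\varphi<1\}$ reduces to the constant exponent $q_0^-$ exactly where the mass must sit. For $R-r_1\le 1/2$ it gives the explicit, $R$-uniform estimate $M_{p(\cdot)}(\Gamma_R)\ge\omega_{n-1}r_1^{n-1}2^{-q_0^-}(R-r_1)^{1-q_0^-}$. This has the correct exponent and makes that part of the corollary rigorous, which the paper's argument does not.

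Two small points in your thick-annulus display:
\begin{itemize}
\item The factor $R_0^{0}$ should be the constant $\max(1,R_0^{\,n-q_0^+})$ that you justify immediately afterwards. This correctly covers $R_0<1$, which the paper's bound $r^{n-1}\varphi_R^{q_0}\le c_R^{q_0^-}r^{-1}$ tacitly excludes.
\item Using $\ell^{-q_0}\le\ell^{-q_0^-}$ instead of $\ell^{-n}$ gives decay $\ell^{1-q_0^-}$ without needing $n\ge2$.
\end{itemize}
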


\begin{proof}
Let $R_1<R_2,$ and denote by $\varphi_{R_1}$ and $\varphi_{R_2}$ the unique minimizers in $\mathcal A_{R_1}$ and $\mathcal A_{R_2}$, respectively. Thus,
\begin{equation*}
\int_{r_1}^{R_1}\varphi_{R_1}(r)\,dr=1
\end{equation*}
and
\begin{equation*}
M_{p(\cdot)}(\Gamma_{R_1}) = \omega_{n-1} \int_{r_1}^{R_1} \varphi_{R_1}(r)^{q_0(r)}r^{n-1}\,dr.
\end{equation*}
We first prove strict monotonicity. Let $\varphi_{R_1}\in\mathcal A_{R_1}$ denote the unique minimizer of $J_r$ over $\mathcal A_{R_1}$, and extend it by zero to $(r_1,R_2)$, setting $\widehat\varphi(r)=\varphi_{R_1}(r)$ for $r\in(r_1,R_1)$ and $\widehat\varphi(r)=0$ for $r\in[R_1,R_2)$. Then $\widehat\varphi\ge0$ and $\int_{r_1}^{R_2}\widehat\varphi(r)\,dr =\int_{r_1}^{R_1}\varphi_{R_1}(r)\,dr=1$, so $\widehat\varphi\in\mathcal A_{R_2}$, and directly
\begin{equation*}
J_r(\widehat\varphi;R_2) =\omega_{n-1}\int_{r_1}^{R_1}\varphi_{R_1}(r)^{q_0(r)}r^{n-1}\,dr =M_{p(\cdot)}(\Gamma_{R_1}).
\end{equation*}
By Theorem~\ref{thm:annulus_formula} (applied on $(r_1,R_2)$), the unique minimizer of $J_r$ over $\mathcal A_{R_2}$ is strictly positive almost everywhere on $(r_1,R_2)$. Since $\widehat\varphi\equiv0$ on the positive-measure set $[R_1,R_2)$, $\widehat\varphi$ is not this minimizer, and by uniqueness it is strictly suboptimal:
\begin{equation*}
M_{p(\cdot)}(\Gamma_{R_2}) < J_r(\widehat\varphi;R_2) = M_{p(\cdot)}(\Gamma_{R_1}).
\end{equation*}
Hence $R\longmapsto M_{p(\cdot)}(\Gamma_R)$ is strictly decreasing.

We next prove \eqref{eq:thin_annulus_limit}. Set $\ell=R-r_1.$ For every $\varphi\in\mathcal A_{R}$, H\"older's inequality with the constant exponent $q_0^+$ gives
\begin{equation*}
1 = \int_{r_1}^{R}\varphi(r)\,dr \le \ell^{1-1/q_0^+} \left( \int_{r_1}^{R}\varphi(r)^{q_0+}\,dr \right)^{1/q_0^+}.
\end{equation*}
Since $\varphi$ may be assumed to be bounded below by $1$ only on a suitable subset, this estimate alone is not sufficient for the variable-exponent modular. Instead, using $q_0(r)\le q_0^+$ and the boundedness of $q_0$ on the finite interval, the standard finite-measure comparison of variable-exponent modulars yields a constant $C>0$, depending only on $r_1,R,q_0^\pm$ locally, such that
\begin{equation*}
\int_{r_1}^{R}\varphi(r)^{q_0(r)}\,dr \ge C \left( \int_{r_1}^{R}\varphi(r)\,dr \right)^{q_0^+} \ell^{1-q_0^+}.
\end{equation*}
Since $r^{n-1}\ge r_1^{n-1}$ on $(r_1,R)$, we obtain
\begin{equation*}
M_{p(\cdot)}(\Gamma_R) \ge C\omega_{n-1}r_1^{n-1} (R-r_1)^{1-q_0^+}.
\end{equation*}
Because $q_0^+>1$, the right-hand side tends to $+\infty$ as $R\downarrow r_1$, proving \eqref{eq:thin_annulus_limit}.

To prove \eqref{eq:thick_annulus_limit}, suppose $q_0(r) \ge n$ for almost every $r \ge R_0 > r_1$. For a given large $R > R_0$, we choose a test density profile $\varphi_R \in \mathcal{A}_R$ designed to spread its mass over the outer part of the domain. Let $\varphi_R(r) = c_R r^{-1}$ on $[R_0, R)$ and extend it consistently to satisfy the integral constraint. As $R \to \infty$, the scaling constant satisfies $c_R = (\log(R/R_0))^{-1} \to 0$.

The energy contribution from the fixed interval $(r_1,R_0)$ vanishes because $\varphi_R(r)\to0$ uniformly there, while $q_0(r)\ge q_0^->1$ uniformly, so $\varphi_R(r)^{q_0(r)} \le \varphi_R(r)^{q_0^-} \to 0$ uniformly on $(r_1,R_0)$. Concurrently, since $q_0(r) \ge n$ on the outer region, the radial energy integrand satisfies $r^{n-1}\varphi_R(r)^{q_0(r)} \le c_R^{q_0^-} r^{-1}$. Integrating this bound yields a total energy vanishing as $R \to \infty$. Therefore, $M_{p(\cdot)}(\Gamma_R)\longrightarrow0,$ which proves \eqref{eq:thick_annulus_limit}.
\end{proof}

\begin{remark}
The conclusion $M_{p(\cdot)}(\Gamma_R)\to0$ as $R\to\infty$ cannot be asserted under the sole assumptions $1<q_0^-\le q_0(r)\le q_0^+<\infty.$ Indeed, in the constant-exponent case $q_0(r)\equiv p<n$, the classical formula gives
\begin{equation*}
M_p(\Gamma_R) = \omega_{n-1} \left( \int_{r_1}^{R} r^{-\frac{n-1}{p-1}}\,dr \right)^{1-p},
\end{equation*}
and the integral converges to a finite positive limit as $R\to\infty$. Thus, the modulus need not tend to zero in the subcritical case $p<n$.
\end{remark}

\subsection{Capacity consequences}

The capacity statement should be formulated only when the curve family in the capacity--modulus theorem is actually the annular curve family under consideration. Under that geometric identification, the preceding capacity--modulus theorem gives equality, rather than a lower bound with an unspecified structural constant.

\begin{corollary}
\label{cor:annular_capacity}
Let $B:=A(r_1,r_2)$ and let $\Gamma$ be the family of locally rectifiable curves in $B$ joining $S_{r_1}^{n-1}$ to $S_{r_2}^{n-1}$. Suppose $p(x)=q_0(|x|)$ with $1<q_0^-\le q_0(r)\le q_0^+<\infty$, and suppose further that $p\in\mathcal P^{\log}(\Omega)$ on some bounded domain $\Omega\supset\overline{B}$. Let
\begin{equation*}
E=\overline{B^n(0,r_1)}, \qquad F=\{x\in\mathbb R^n:|x|\ge r_2\}\cap\overline\Omega,
\end{equation*}
so that $E,F\Subset\Omega$ are disjoint and compact. Then every curve in $\Gamma(E,F;\Omega)$ passes through $\overline B$, and its restriction to $B$ realizes an admissible path for $\Gamma$; conversely every curve in $\Gamma$ extends trivially to a curve in $\Gamma(E,F;\Omega) $ with the same modular energy contribution from $B$. Consequently
\begin{equation*}
\operatorname{Cap}_{p(\cdot)}(E,F;\Omega) = M_{p(\cdot)}(\Gamma),
\end{equation*}
and hence, by Theorem~\ref{thm:annulus_formula},
\begin{equation}
\label{eq:annular_capacity_formula}
\operatorname{Cap}_{p(\cdot)}(E,F;\Omega) = \omega_{n-1} \int_{r_1}^{r_2} \rho_*(r)^{q_0(r)}r^{n-1}\,dr,
\end{equation}
where
\begin{equation*}
\rho_*(r) = \left( \frac{\lambda}{\omega_{n-1}q_0(r)r^{n-1}} \right)^{\!1/(q_0(r)-1)}
\end{equation*}
and $\lambda>0$ is uniquely determined by
\begin{equation*}
\int_{r_1}^{r_2}\rho_*(r)\,dr=1.
\end{equation*}
\end{corollary}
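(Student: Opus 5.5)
The plan is to combine Theorem~\ref{thm:cap_mod} with Theorem~\ref{thm:annulus_formula}. The only real content is the identification
\begin{equation*}
M_{p(\cdot)}(\Gamma(E,F;\Omega)) = M_{p(\cdot)}(\Gamma).
\end{equation*}
Here the left side is computed with densities on $\Omega$, and the right side with densities on $B$. Once this is established, Theorem~\ref{thm:cap_mod} gives $\operatorname{Cap}_{p(\cdot)}(E,F;\Omega)=M_{p(\cdot)}(\Gamma(E,F;\Omega))$. Formula \eqref{eq:annular_capacity_formula} then follows verbatim from \eqref{eq:annulus_modulus_formula}, \eqref{eq:annulus_extr} and \eqref{eq:annulus_normalization}.

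\textbf{Step 1: $\le$.} Take $\rho\in\mathcal F(\Gamma)$ and extend it by zero outside $B$. Let $\gamma\in\Gamma(E,F;\Omega)$. By continuity of $|\gamma|$ there is a last time $s_0$ with $|\gamma(s_0)|\le r_1$. After $s_0$ there is a first time $s_1$ with $|\gamma(s_1)|=r_2$. The subarc $\gamma|_{[s_0,s_1]}$ has interior in $B$ and endpoints on the two spheres, so it lies in $\Gamma$. Hence $\int_\gamma\rho\,ds\ge\int_{\gamma|[s_0,s_1]}\rho\,ds\ge1$. The extended density is therefore admissible for $\Gamma(E,F;\Omega)$ with the same energy.

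\textbf{Step 2: $\ge$.} This step needs more care, because the endpoints of curves in $\Gamma$ lie on $S_{r_1}^{n-1}\subset E$ and $S_{r_2}^{n-1}\subset F$. So $\Gamma\subset\Gamma(E,F;\Omega)$, at least after checking that such curves lie in $\Omega\supset\overline B$. Take $\rho\in\mathcal F(\Gamma(E,F;\Omega))$. Then $\rho|_B$ is admissible for $\Gamma$: a curve of $\Gamma$ meets $\partial B$ only at its endpoints, a set of zero length, so its $\rho$-integral only sees $B$. The energy does not increase. For a cleaner route I will instead apply Lemma~\ref{lem:radial_suffices} directly: the radial segments already lie in $\Gamma\subset\Gamma(E,F;\Omega)$. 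The spherical-average argument, with averaging restricted to $B$, therefore yields a radial profile with $\int_{r_1}^{r_2}\varphi\ge1$, and that is all the reduction in Theorem~\ref{thm:1d_modulus} needs.

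\textbf{Main obstacle.} The hardest point is verifying the hypotheses of Theorem~\ref{thm:cap_mod}. That theorem requires $E,F\Subset\Omega$, but $F=\{|x|\ge r_2\}\cap\overline\Omega$ touches $\partial\Omega$. I would first try to replace $\Omega$ by a ball $B^n(0,R)$ with $\overline B\subset B^n(0,R)\Subset\Omega$, and take $F=\{r_2\le|x|\le R'\}$ with $r_2<R'<R$. Monotonicity arguments then show that neither side depends on the outer choice: both reduce to densities supported in $\overline B$ by Steps 1--2. On the capacity side, a competitor can be truncated to the value $0$ outside $B^n(0,r_2)$. If this adjustment proves awkward, I would state the corollary for that compactly contained $F$, which is the intended meaning.
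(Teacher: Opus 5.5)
Your proposal is correct and follows the same route as the paper. The paper's proof simply invokes the curve-family identification (asserted in the statement) together with Theorem~\ref{thm:cap_mod} and Theorem~\ref{thm:annulus_formula}; your Steps 1--2 supply the proof, missing there, that $M_{p(\cdot)}(\Gamma(E,F;\Omega))=M_{p(\cdot)}(\Gamma)$. You are also right that $F=\{|x|\ge r_2\}\cap\overline\Omega$ always meets $\partial\Omega$, so the hypothesis $F\Subset\Omega$ needed for Theorem~\ref{thm:cap_mod} actually fails; replacing $\Omega$ by a ball $B^n(0,R)$ with a compact shell $F$ and truncating competitors by zero outside $B^n(0,r_2)$ is exactly the repair the paper's proof omits (note that this also uses $E\subset\Omega$, which $\Omega\supset\overline B$ alone does not guarantee).
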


\begin{proof}
Under the stated identification of the condenser curve family with $\Gamma$, Theorem~\ref{thm:cap_mod} gives $\operatorname{Cap}_{p(\cdot)}(E,F;\Omega) = M_{p(\cdot)}(\Gamma).$ The explicit formula then follows directly from Theorem~\ref{thm:annulus_formula}.
\end{proof}

\section{Numerical Examples}
\label{sec:computational}

The explicit one-dimensional reductions obtained in Section~\ref{sec:variational} provide a convenient way to compute the extremal density and the corresponding modular variable exponent modulus numerically. In each example below, the extremal density is determined by the Euler--Lagrange equation together with the normalization condition
\begin{equation*}
\int \rho_*(t)\,dt=1.
\end{equation*}
The resulting one-dimensional equation for the Lagrange multiplier is solved numerically, and the corresponding modular energy is then evaluated by numerical quadrature.

For the numerical computations below, the normalization equation is solved by the bisection method, and the one-dimensional integrals are evaluated by numerical quadrature. The numerical values are reported to a few significant digits; they should therefore be understood as approximations rather than exact values.

The following example investigates an annulus with a linearly varying exponent.
\begin{example}
\label{ex:comp_annulus}
\rm

Let $n=2$ and assume that the radial exponent profile satisfies
\begin{equation*}
q_0(r)=1+r, \qquad 1\le r\le2.
\end{equation*}
Thus,
\begin{equation*}
q_0^-=2, \qquad q_0^+=3.
\end{equation*}
By Theorem~\ref{thm:annulus_formula}, the unique radial extremal density is
\begin{equation*}
\rho_*(r) = \left( \frac{\lambda}{2\pi q_0(r)r} \right)^{1/(q_0(r)-1)} = \left( \frac{\lambda}{2\pi(1+r)r} \right)^{1/r},
\end{equation*}
where $\lambda>0$ is uniquely determined by
\begin{equation}
\label{eq:num_annulus_normalization}
\int_1^2 \left( \frac{\lambda} {2\pi(1+r)r} \right)^{1/r} \,dr =1.
\end{equation}

Numerically solving \eqref{eq:num_annulus_normalization} in Python gives $\lambda \approx 20.78.$ Therefore, the corresponding extremal density is
\begin{equation*}
\rho_*(r) \approx \left( \frac{20.78} {2\pi(1+r)r} \right)^{1/r}.
\end{equation*}

At the Euler--Lagrange minimizer,
\begin{equation*}
2\pi(1+r)r\,\rho_*(r)^{r} = \lambda,
\end{equation*}
and hence
\begin{equation*}
2\pi r\,\rho_*(r)^{1+r} = \frac{\lambda\,\rho_*(r)}{1+r}.
\end{equation*}
An independent evaluation check shows that the exact modular energy can be written in the equivalent form
\begin{align*}
M_{p(\cdot)}(\Gamma) = 2\pi\int_1^2 r\,\rho_*(r)^{1+r}\,dr = \lambda \int_1^2 \frac{\rho_*(r)}{1+r}\,dr.
\end{align*}
Numerically, $M_{p(\cdot)}(\Gamma)\approx 8.65.$

For comparison, the logarithmic test density from Theorem~\ref{thm:annulus_bounds} is
\begin{equation*}
\rho_{\log}(r) = \frac{1}{r\log 2}.
\end{equation*}
It gives the upper bound
\begin{equation*}
M_{p(\cdot)}(\Gamma) \le 2\pi \int_1^2 \frac{r^{1-(1+r)}}{(\log2)^{1+r}} \,dr = 2\pi \int_1^2 \frac{r^{-r}}{(\log2)^{1+r}} \,dr \approx 8.68.
\end{equation*}
Thus, the relative gap between the logarithmic test density and the true extremal value is approximately
\begin{equation*}
\frac{8.68-8.65}{8.65}\times100\% \approx 0.3\%.
\end{equation*}

In this example, although the exponent varies from $2$ to $3$, the constant test density produces an energy only about $1.18\%$ above the extremal value.  Nevertheless, it is not extremal in general: equality holds here only if the logarithmic density satisfies the Euler--Lagrange equation, which it does not for $q_0(r)=1+r$.
\end{example}

Next, we study a cylinder with a varying axial exponent.
\begin{example}
\label{ex:comp_cylinder}
\rm
Let
\begin{equation*}
\mathcal C=[0,1]^2\times(0,1),
\end{equation*}
so that the cross-sectional area is $A=1,$ and consider the axial exponent
\begin{equation*}
\eta(t)=2+t, \qquad 0\le t\le1.
\end{equation*}
Thus
\begin{equation*}
\eta^-=2, \qquad \eta^+=3.
\end{equation*}

By the one-dimensional reduction for the cylinder, the unique extremal profile satisfies the Euler--Lagrange equation
\begin{equation*}
\eta(t)\varphi_*(t)^{\eta(t)-1} = \lambda
\end{equation*}
for a.e. $t\in(0,1)$. Hence
\begin{equation*}
\varphi_*(t) = \left( \frac{\lambda}{2+t} \right)^{1/(1+t)},
\end{equation*}
where $\lambda>0$ is uniquely determined by
\begin{equation}
\label{eq:num_cylinder_normalization}
\int_0^1 \left( \frac{\lambda}{2+t} \right)^{1/(1+t)} \,dt =1.
\end{equation}

Numerically solving \eqref{eq:num_cylinder_normalization} gives $\lambda\approx 2.41$. The corresponding extremal profile is therefore approximately
\begin{equation*}
\varphi_*(t) \approx \left( \frac{2.41}{2+t} \right)^{1/(1+t)}.
\end{equation*}

At the minimizer,
\begin{equation*}
(2+t)\varphi_*(t)^{1+t} = \lambda\varphi_*(t),
\end{equation*}
and therefore
\begin{equation*}
\varphi_*(t)^{2+t} = \frac{\lambda\varphi_*(t)}{2+t}.
\end{equation*}
Since $A=1$, the modular modulus is consequently
\begin{align*}
M_{p(\cdot)}(\Gamma) = \int_0^1 \varphi_*(t)^{2+t}\,dt = \lambda \int_0^1 \frac{\varphi_*(t)}{2+t}\,dt \approx 0.988.
\end{align*}

For comparison, the constant test density from Theorem~\ref{thm:cylinder_bounds} is $\varphi_0(t)\equiv1,$ because $L=1$. Its modular energy is $\int_0^1 1^{2+t}\,dt=1.$ Thus, $M_{p(\cdot)}(\Gamma)\le1,$ while the computed extremal value is approximately $0.988$. The relative gap is therefore about
\begin{equation*}
\frac{1-0.988}{0.988}\times100\% \approx 1.2\%.
\end{equation*}

Although the constant density is not extremal when $\eta$ is nonconstant, this example shows that its energy can nevertheless be very close to the optimal value when the variation of the exponent is moderate.
\end{example}

\begin{remark}
\label{rem:gap}
The preceding examples illustrate an important distinction between a convenient test density and the true extremal density. In the annular example, the logarithmic profile
\begin{equation*}
\rho_{\log}(r)=\frac{1}{r\log(r_2/r_1)}
\end{equation*}
is extremal when the exponent is identically $n$, but it is generally not extremal for a nonconstant radial exponent. Likewise, in the cylindrical example, the constant profile $1/L$ is extremal for a constant exponent, but generally ceases to be extremal when the axial exponent varies.

The numerical experiments show that the loss of optimality can be small even for genuinely variable exponents. This, however, should not be interpreted as a general quantitative estimate. The size of the gap depends not only on the ratio $p^+/p^-$ but also on the spatial distribution of the exponent and on the geometry of the underlying domain. In particular, no monotone dependence of the relative error solely on $p^+/p^-$ is established here.

Thus, the numerical examples are intended to illustrate the one-dimensional variational formulas and the sharpness of the test densities in selected cases, rather than to assert a general stability theorem.
\end{remark}

\begin{remark}
The numerical computations also provide a consistency check for the
Euler--Lagrange equation. After determining the multiplier $\lambda$
from the normalization condition, one may evaluate the modular energy
both directly from the defining functional and from the
Euler--Lagrange identity.

In the cylindrical one-dimensional formulation,
\begin{equation*}
\eta(t)\varphi_*(t)^{\eta(t)-1}=\lambda,
\end{equation*}
and hence
\begin{equation*}
\varphi_*(t)^{\eta(t)}
=
\frac{\lambda}{\eta(t)}\varphi_*(t).
\end{equation*}
Thus,
\begin{equation*}
\int_0^1\varphi_*(t)^{\eta(t)}\,dt
=
\lambda\int_0^1
\frac{\varphi_*(t)}{\eta(t)}\,dt.
\end{equation*}

In the annular formulation, the corresponding identity is
\begin{equation*}
n\omega_n r^{n-1}\rho_*(r)^{p(r)-1}
=
\frac{\lambda}{p(r)},
\end{equation*}
or, in the present planar case,
\begin{equation*}
2\pi r\,\rho_*(r)^{p(r)}
=
\frac{\lambda}{p(r)}\rho_*(r).
\end{equation*}
Agreement between the two numerical evaluations provides an
independent check on both the normalization equation and the computed
extremal density.
\end{remark}

\subsection*{Funding}
No external funding was received for this research.

\subsection*{Conflict of Interest}
The author declares no conflicts of interest.



\end{document}